\pgfplotsset{compat=newest}
\tikzset{
    Ultra Thick/.style={line width=3.0pt},
    ULTRA Thick/.style={line width=4.0pt}
}
\newtheorem{thm}{Theorem}
\newtheorem{lem}{Lemma}
\newtheorem{rem}{Remark}
\newtheorem{asm}{Assumption}
\newtheorem{defi}{Definition}
\newtheorem{prop}{Proposition}
\newtheorem{prb}{Problem}
\newtheorem{t_mod}{Traffic Model}
\begin{document}

\title{Stability of vehicular admission control schemes in urban traffic networks under modelling uncertainty}

\author{Michalis Ramp$^{a}$, Andreas Kasis$^{a}$, and Stelios Timotheou$^{a}$
\thanks{This work is supported by the European Union (i. ERC, URANUS, No. 101088124, and ii. Horizon 2020 Teaming, KIOS CoE, No. 739551), and the Government of the Republic of Cyprus through the Deputy Ministry of Research, Innovation, and Digital Strategy. Views and opinions expressed are however those of the author(s) only and do not necessarily reflect those of the European Union or the European Research Council Executive Agency. Neither the European Union nor the granting authority can be held responsible for them. }
\thanks{$^{a}$ All the authors are with {KIOS Research and Innovation Center of Excellence}, {1 Panepistimiou Avenue}, {2109 Aglantzia}, {Nicosia}, {Cyprus}.
Emails:
\tt\{ramp.michalis,kasis.andreas,
timotheou.stelios\}@ucy.ac.cy
}
}

\maketitle

\begin{abstract}
Urban transportation networks face significant challenges due to traffic congestion, leading to adverse environmental and socioeconomic impacts. 
Vehicular admission control (VAC) strategies have emerged as a promising solution to alleviate congestion. 
By leveraging information and communication technologies, VAC strategies regulate vehicle entry into the network to optimize different traffic metrics of interest over space and time. 
Despite the significant development of VAC strategies, their stability at the presence of modelling uncertainty remains under-explored.
This paper investigates the stability properties of a class of decentralized VAC schemes under modelling uncertainty. 
Specifically, we consider large-scale, heterogeneous urban traffic networks characterised by nonlinear dynamics and concave macroscopic fundamental diagrams with bounded uncertainty between flow, density, and speed.
In this context, we examine a broad class of decentralized VAC dynamics, described by general nonlinear forms. 
Using passivity theory, we derive scalable, locally verifiable conditions on the design of VAC schemes, that enable stability guarantees in the presence of modelling uncertainty. 
Several examples are presented to illustrate the applicability of the proposed design framework.
Our analytical results are validated through numerical simulations on a 6 and a 20-region system, demonstrating their effectiveness and practical relevance.
\end{abstract}

\begin{IEEEkeywords}
Traffic control, Passivity, Vehicular admission control, Large-scale systems, Decentralized control
\end{IEEEkeywords}

\section{Introduction\label{sec:intro}}

\IEEEPARstart{T}{raffic} congestion remains one of the most critical challenges in urban transportation networks, leading to severe societal, economic, and environmental consequences.
As urbanization accelerates and vehicle ownership rises, traffic demand increasingly exceeds available infrastructure capacity, particularly during peak hours, causing congestion. This imbalance can result in excessive delays and, in extreme cases, \textit{gridlock}, where vehicular movement comes to a complete standstill\cite{causes2}. To address these issues, researchers have extensively explored congestion mitigation strategies, ranging from traffic control measures, such as traffic signal optimization and variable speed limits, to demand-side interventions that influence traveller decisions on route choice and departure timing. 
Despite significant advancements, modelling and efficiently managing traffic in multi-regional urban networks remain challenging problems due to the dynamic interactions between different network regions, the unpredictability of human behaviour, and the limitations of existing infrastructure. 

Research on traffic flow modelling has been an active field since the first studies that identified the fundamental relations in 1934 and 1935\cite{Greenshields1934THEPM,
Greenshields1935ASO}. Early developments led to the emergence of both microscopic and macroscopic traffic models in the 1950s\cite{Reuschel1950,
Chandler1958,
Leslie1961}. \textit{Microscopic models} focus on the behaviour of individual vehicles, capturing interactions such as car-following and lane-changing dynamics. In contrast, \textit{macroscopic models} describe traffic at an aggregated level, such as the road link level, treating vehicles as a continuous flow and using accumulated quantities like vehicle density, flow, and speed to represent the system’s state. Link-level macroscopic models are often preferred over microscopic models for large-scale real-time traffic control applications due to their reduced complexity\cite{Orosz2010TrafficJD,
WageningenKessels2015GenealogyOT}. However, their representational effectiveness is frequently undermined by significant uncertainties arising from complex vehicle interactions in urban environments.


A key advancement was the development of the \textit{macroscopic fundamental diagram} (MFD), which provides a low-scatter, unimodal relationship between traffic flow and vehicle accumulation in homogeneous traffic networks. The concept was first suggested by empirical observations in\cite{Godfrey1969}, which showed that every traffic network is characterized by a unique minimum average trip speed dictated by its physical structure. This laid the foundation for subsequent studies\cite{Herman1979ATA,
DAGANZO200749,
GEROLIMINIS2008759,
Helbing2009}, which confirmed that urban traffic could be effectively described using MFDs. The MFD framework enables the partitioning of large-scale urban networks into homogeneous \textit{regions}, where each region exhibits its own distinct MFD, capturing the relationship between flow and vehicle accumulation\cite{DAGANZO200749}. This regional partitioning enables the development of effective real-time traffic control strategies\cite{RAMEZANI20151,ZHANG20131,DAGANZO200749,geroliminis2013}, as MFD-based approaches are robust to demand fluctuations and provide an aggregated yet effective representation of network dynamics.

Traffic control aims to optimize vehicular flows across the network\cite{papageorgioureview}, employing strategies such as \textit{route guidance} to redistribute traffic\cite{geroleminisisik,MENELAOU2017,Lei2020}, and \textit{gating} and \textit{perimeter control} (PC) to regulate inflows in protected regions\cite{papageorgiou1990dynamic,papageorgioureview,ABOUDOLAS2013265,HADDAD2012,HADDAD2014,KOUVELAS2017,LI2020311,Lei2020,NING2023}. 
However, while protected regions remain congestion-free, other areas of the network, including the perimeter, experience exacerbated congestion\cite{kouvelas2018hierarchical}. 
To address such problems Model Predictive Control (MPC) approaches have been utilized\cite{mpc,geroliminis2013,SIRMATEL2021}, which typically combine one or more of these strategies in a hierarchical manner to improve traffic conditions\cite{Zhou2016,FU2017,Boufous2020}. 
While offering promising results, MPC schemes often overlook stability and uncertainty considerations and are computationally intensive.

Despite the plethora of traffic control strategies and modern enabling technologies, congestion continues to be a persistent challenge\cite{spectrum2019}. This is primarily driven by factors such as demand exceeding infrastructure capacity, inherent uncertainties in traffic modelling, and the occurrence of disruptive events. \textit{Vehicular admission control} (VAC) strategies have emerged as promising solutions with the potential to significantly reduce or even eliminate congestion. By harnessing information and communication technologies, VAC regulates vehicle entry into the traffic network, such as by instructing vehicles when to depart from their origin, to optimize key traffic metrics\footnote{While other works characterize \textit{vehicle departure time selection} as a demand management strategy, we consider it as a VAC strategy. The reason is to emphasize that the objective is not to modify demand itself, but rather to regulate its entry into the system.}. This ensures that waiting vehicles remain outside the network, preventing excessive accumulation, maintaining smooth traffic flow, and allowing users to fully utilize their time at their origin before departure\cite{MENELAOU2017}.
Although VAC may seem similar to PC, there is significant difference between these two control approaches.
In particular, PC regulates the flows exchanged between a protected region and its neighbours, while VAC does not. 
Instead, VAC controls the spatio-temporal distribution of inflow in different regions from the outside world, but not the inter-regional flows.

Early research on VAC focused on optimizing individual vehicle departure times and routes in a single region\cite{MENELAOU2017,menelaouTRR,menelaouTIV}. Leveraging road reservations to predict future network conditions, \cite{MENELAOU2017} considered the minimization of travel times on a per-vehicle basis, while ensuring that the regional density remained below the \textit{critical density}, where maximum outflow is observed. Expanding on this approach,\cite{menelaouTRR} introduced a composite objective to promote regional homogeneity, while\cite{menelaouTIV} enhanced the model by predicting link speeds instead of assuming free-flow conditions. To accommodate large networks with multiple regions, several studies have adopted a macroscopic VAC framework that integrates regional dynamics\cite{MENELAOU2022, MENELAOU2023, RAMP2024}. The works in\cite{MENELAOU2022, MENELAOU2023} investigated triangular and nonlinear MFDs within MPC frameworks, developing tailored optimization algorithms to solve the resulting nonconvex problems, but without considering uncertainty or stability. 

Developing VAC strategies that account for uncertainty and guarantee stability remains a significant challenge. The critical role of stability was emphasized in\cite{RAMP2024}, which demonstrated that stability may be compromised when a region is operated near the critical density point, where even minor disturbances, such as small increases in demand or flow irregularities, may lead to congestion or even gridlock. 
Hence, an ongoing challenge is the identification of suitable conditions on VAC dynamics that ensure traffic network stability.
Moreover, such conditions are important to be scalable, enabling VAC designs that require no recalibration when network dynamics or topology change, and distributed, minimizing computational and communication demands while avoiding a single point of failure.
Another significant challenge arises from modelling uncertainty in MFDs, which stems from factors such as traffic heterogeneity, poor data quality, erratic driver behaviour, and unpredictable events like accidents and changing weather conditions. 
These challenges underscore the need to investigate the design and stability properties of VAC schemes in large-scale urban transportation networks where regional MFDs are uncertain.

\noindent\textbf{Contributions.}\quad This paper investigates the behaviour of traffic networks with heterogeneous regional, nonlinear, MFD dynamics in the presence of modelling uncertainty. 
The considered system is coupled with a broad class of nonlinear VAC dynamics.  
Using passivity theory, we derive conditions on the VAC dynamics that guarantee asymptotic stability. 
These conditions are distributed, relying only on locally available information, locally verifiable, and applicable to arbitrary connected network configurations, ensuring scalability. 
The applicability of the proposed approach is demonstrated with five VAC scheme examples that satisfy the developed conditions.
Numerical simulations on a 6 and a 20-region system highlight the effectiveness and practicality of the approach by showing network stability under suitable VAC schemes, even in the presence of modelling uncertainty and user non-adherent behaviour.
	
To the authors' knowledge, this is the first study that: 
\begin{enumerate}[(i)]
\item Analytically studies the stability properties of large-scale traffic networks at the presence of modelling uncertainty.

\item Develops a framework for the design of VAC schemes, based on scalable, locally verifiable conditions, such that stability is guaranteed for large-scale traffic networks with uncertain dynamics.
\end{enumerate}

{\noindent\textbf{Paper structure.}\quad The remainder of the paper is organized as follows. \Cref{sec:problem_formulation} introduces the dynamics of the traffic network and outlines the problem statement. \Cref{sec:con_dyn} formally defines the class of VAC dynamics under consideration and characterizes the equilibria of the feedback interconnection. \Cref{sec:stab_cond} presents the main stability result, while \Cref{sec:con_ex} provides five examples of control dynamics that comply with the analysis, demonstrating the applicability and versatility of the derived stability conditions. \Cref{sec:num_res} showcases the practicality and effectiveness of the proposed VAC framework through simulations on a 6-region and a 20-region network. 
Finally, conclusions are drawn in \Cref{sec:conclusions}. 
The proofs of the stability results are provided in the appendices.

\noindent\textbf{Notation.}\quad The set of real numbers is given by $\mathbb{R}$, and the set of real non-negative numbers by $\mathbb{R}_+$.
We denote vectors by bold small letters with the $i^{th}$ component of a vector $\mathbf{y}$ given by $y_i$.
The set $\mathbb{R}^n$ contains all $n$-dimensional real vectors and the set $\mathbb{S}^{n}$ contains all the real, symmetric matrices of $n\times n$ dimension.
The $n\times1$ vector with all elements equal to 0 is given by $\mathbf{0}\in\mathbb{R}^{n}$.
Capital calligraphic letters denote sets. The operation of removing an element $i$ from a set $\mathcal{K}$ is denoted by $\mathcal{K}\setminus \{i\}$. 
For $a,b\in\mathbb{R}$ and $a\leq b$, the min-max operator is given by $[x]_a^b = \max(\min(x,b),a)$.
A function $f:\mathbb{R}^n\to\mathbb{R}$ is called positive definite if $f(0) = 0$ and $f(x) > 0$ for all $x \in \mathbb{R}^n$ elsewhere, and it is negative definite if $f(0) = 0$ and $f(x) < 0$ for every $x \neq 0$. 
The equilibrium of a system with state $x$ is denoted by $x^*$.
The Laplace transform of a system with input $\rho(t)$ and output $u(t)$ is denoted by G(s).
Units of variables are stated at first mention for clarity and conciseness.

\section{Problem Formulation\label{sec:problem_formulation}}

\subsection{Model description}
In this work, the macroscopic vehicular behaviour of a traffic network is modelled by an {$n$-region} network ($n\geq2$).
\textit{Nodes} are connected via \textit{edges} and the network structure is described by a directed graph $\mathcal{G}=\{\mathcal{N},\mathcal{E}\}$ where $\mathcal{N}=\{1,2,...,n\}$ is the set of nodes and $\mathcal{E}\subseteq\mathcal{N}\times\mathcal{N}$ is the set of edges.
An edge facilitating the flow of vehicles from region~$i$ to region~$j$, is denoted by $\epsilon_{i,j}=(i,j)\in\mathcal{E}$.
\textit{Predecessor} regions that can directly send vehicles to region~$i$ belong to the set $\mathcal{P}_{i}=\{j\in\mathcal{N}:\epsilon_{j,i}\in\mathcal{E}\}$. 
\textit{Successor} regions that directly receive vehicles from region~$i$ belong to the set $\mathcal{S}_{i}=\{l\in\mathcal{N}:\epsilon_{i,l}\in\mathcal{E}\}$.
To facilitate the reader an example network demonstrating predecessor-successor connectivity is provided in Fig.~\ref{fig:p_s}.
Vehicle inflow is introduced to the network via origin regions.
In this work, it is assumed that every vehicle that enters the network, does so to reach a destination region of the network.
Flow is absorbed by a destination region when vehicles reach their destination.
Also, all regions are both destination and origin regions.

\begin{figure}[t]
\centering
\includegraphics[width=1\columnwidth]{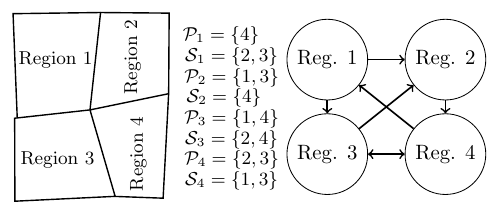}
\vspace{-2mm}
\caption{Predecessor-successor connectivity example for a vehicular network with four regions. Left-to-right: Network topology, predecessor-successor sets, node connectivity via directed edges.
}
\label{fig:p_s}
\vspace{-3mm}
\end{figure}

The regions of the network are considered to be homogeneous  in the sense that the aggregated traffic behaviour of each region~$i$\footnote{Regions are heterogeneous among them since they can have different dynamics.}, 
is captured by a nonlinear {macroscopic} fundamental diagram (MFD)\cite{DAGANZO200749,GEROLIMINIS2008759,KNOOP2015}, see Fig.~\ref{fig:tmfd},
that describes the total inter-regional flow, $g_i(\rho_i)\;[{\text{veh}}/{\text{h}}]$ of the region ($\rho_i\;[{\text{veh}}/{\text{km}}]$ denotes vehicle density) as follows
\begin{align}
g_i(\rho_i)={r_i}f_i(\rho_i)+d_i(\rho_i),\forall i\in\mathcal{N}.\label{eq:g_i}
\end{align}
The term $f_{i}:[0,\rho_i^{J}]\to\mathbb{R}_+$ is a bounded general nonlinear function of vehicle density ($\rho_i^{J}$ denotes the jam density threshold), and is equal to the product between the vehicle density of region~$i$ and vehicle speed, $\psi_i\;[{\text{km}}/{\text{h}}]$ of the same region
\begin{equation}
f_{i}(\rho_i)=\rho_i \psi_i(\rho_i),\forall i\in\mathcal{N}.
\end{equation}
The term $d_i:[0,\rho_i^{J}]\to\mathbb{R}$, captures unmodelled dynamics by describing modelling uncertainty.
Finally, $r_i\in\mathbb{R}_+$ is the region~$i$ trip completion ratio given by
\begin{equation}
r_i=L_i/ l_i,i\in\mathcal{N},
\end{equation}
with $L_i\;[\text{km}]$ being the length of region~$i$, and $l_i\;[\text{km}]$ the average trip length of a vehicle in the same region.

Next, we present the assumptions regarding the uncertainty modelling term $d_i:[0,\rho_i^{J}]\to\mathbb{R}$ and the nonlinear function $f_{i}:[0,\rho_i^{J}]\to\mathbb{R}_+$:
\begin{asm}\label{asm:d_prop}
The uncertainty term, $d_{i}(\rho_i)$:
\begin{enumerate}[(i)]
  \item is bounded and Lipschitz, i.e.,
\begin{align}
\lvert d_i(x)-d_i(y)\rvert\leq &v^{d,L}_i \lvert x-y\rvert,\forall x,y\in [0,\rho_i^J], i\in\mathcal{N},
\label{eq:Lipsc_di}
\end{align}
where $v^{d,L}_i\in\mathbb{R}_+$ is the Lipschitz constant, \label{asm_d:1}
    \item 
it takes values that always yield non-negative inter-regional flow values, i.e.,
\begin{align}
d_i(\rho_i)\geq-{r_i}f_i(\rho_i),
\forall i\in\mathcal{N}.\label{eq:pos_gi_di}
\end{align}\label{asm_d:2}
     \item \label{asm_d:3} it takes values such that $g_i(\rho_i)$, is concave with a unique maximum $g^C_i$ at $\rho_i^{C,g}$. 
\end{enumerate}
\end{asm}
\begin{asm}\label{asm:f_prop}
The nonlinear function, $f_{i}(\rho_i(t))$, is:
\begin{enumerate}[(i)]
    \item \label{asm:f_prop_0} concave in $[0,\rho_i^J]$, with a unique maximum at $\rho_i=\rho_i^{C,f}\in(0,\rho_i^J)$ where $\rho_i^{C,f}\;[{\text{veh}}/{\text{km}}]$ denotes the critical density threshold $f_i$,
    \item \label{asm:f_prop_2} Lipschitz continuous, with Lipschitz constant $v^L_i$;
i.e.,
\begin{equation}\label{eq:Lipsc}
\lvert f_i(x)-f_i(y)\rvert\leq v^L_i \lvert x-y\rvert,\forall\; x,y\in [0,\rho_i^J],i\in\mathcal{N},
\end{equation}
\end{enumerate}
\end{asm}
\Cref{asm:d_prop}\eqref{asm_d:1} ensures that $d_i:[0,\rho_i^{J}]\to\mathbb{R}$ is bounded and continuous. 
This is to ensure that the resulting inter-regional flow values will capture real observations since MFDs describe a uniform, low-scatter relationship that is not affected by the change of demand\cite{Godfrey1969,DAGANZO200749,GEROLIMINIS2008759}.
\Cref{asm:d_prop}\eqref{asm_d:2} ensures that the values of the uncertainty term will not result to negative inter-regional flow values, violating the physical behaviour that characterises MFD relationships.
\Cref{asm:d_prop}\eqref{asm_d:3} is needed to describe the behaviour of the total inter-regional flow, $g_i(\rho_i)$, namely that even in a perturbed state, the MFD still shows two distinct flow regimes (free-flow, and congested flow) with a peak that can occur at a density that can be different from $\rho_i^{C,f}$\cite{Godfrey1969,DAGANZO200749,GEROLIMINIS2008759}.
In practical settings, and despite flow fluctuations or congestion, it has been demonstrated that the peaks of $g_i(\rho_i)$ and $f_i(\rho_i)$ occur at densities that are close\cite{GEROLIMINIS2008759,KEYVANEKBATANI20121393,AMIRGHOLY2017261}.
Assumptions~\ref{asm:f_prop}(\ref{asm:f_prop_0}-\ref{asm:f_prop_2}) ensure that the nonlinear function $f_{i}(\rho_i(t))$ yields values similar to real flow measurements and result in $f_{i}(\rho_i(t))$ accurately describing the two traffic conditions that characterize the total inter-regional flow of a region~$i$, $g_i(\rho_i(t))$, see Fig.~\ref{fig:tmfd}.
For a region~$i$, these traffic conditions are defined by the density value $\rho_i^{C,g}$ that corresponds to the maximum value of $g_i(\rho_i)$, i.e., $g^C_i$.
If $\rho_i(t)\leq\rho_i^{C,g}$ the region operates at free-flow conditions, while if $\rho_i(t)>\rho_i^{C,g}$ the region operates at congestion, with the behaviour produced by $g_i(\rho_i)$ for both conditions consistent with real MFD observations\cite{GEROLIMINIS2008759}.

\begin{rem}
The assumption of Lipschitz continuity in the function $g_i$, as follows from \Cref{asm:d_prop}\eqref{asm_d:1} and \Cref{asm:f_prop}\eqref{asm:f_prop_2}, is made to support the subsequent analysis.
This is justified since, although discontinuities are present at individual edges on the microscopic level, these are effectively smoothed out at the macroscopic level, where the flows of all edges are aggregated into a continuum describing the region's overall traffic dynamics.
However, this assumption does have limitations, since it may not fully capture, non-smooth real-world events that introduce discontinuities into the MFD.
Nevertheless, at a macroscopic level, Lipschitz continuity remains a reasonable and justified assumption.
\end{rem}

Flows exiting the network through destination regions and transfer flows between regions are calculated according to the MFDs total inter-regional flow, $g_i(\rho_i)$ (see \eqref{eq:g_i}), with the inter-regional transfer flow from region~$i$ to region~$l$, given by
\begin{align}
g^{}_{il}(\rho_{i})=w_{il}g_i(\rho_{i}(t)),i\in\mathcal{N},l\in\mathcal{S}_i,\label{eq:gil}
\end{align}
with $w_{il}\in\mathbb{R}_+$ being the outflow split constants for each region that satisfy the following equality
\begin{equation}
w_{ii}+\sum_{l\in\mathcal{S}_i}w_{il}=1,\forall i\in\mathcal{N}.\label{eq:out_distr_2}
\end{equation}
Note that the outflow split constant $w_{ii}\in\mathbb{R}_+$ is the rate that vehicles end their trip in region~$i$.

Hence, the continuous-time evolution of the vehicle density state of each region~$i$, $\rho_{i}(t)$, is given by
\begin{align}\label{eq:d_rho}
\dot{\rho_i}(t)=&L_{i}^{-1}\big(-w_{ii} g_i(\rho_i(t))-\sum_{l\in\mathcal{S}_i}g_{il}(\rho_{i}(t))\nonumber\\
&+\sum_{j\in\mathcal{P}_i}g_{ji}(\rho_{j}(t))
+u_{i}(t)
\big),\forall i\in\mathcal{N}.
\end{align}
The first term in the right hand side of \eqref{eq:d_rho} is the flow exiting the network through destination region~$i$, the second term is the flow towards successor nodes, the third term is the flow from predecessor nodes and the last term, $u^{}_{i}(t)\;[{\text{veh}}/{\text{h}}]$, is the serviced demand admitted to the network through the origin region~$i$ and is considered a control variable. 

For conciseness, the traffic dynamics are defined in a compact form as follows: 
\begin{t_mod}\label{str:t_mod}
The traffic dynamics of the considered $n$-region connected traffic network are described by \eqref{eq:g_i}, \eqref{eq:gil}, \eqref{eq:d_rho} and Assumptions~\ref{asm:d_prop}, and \ref{asm:f_prop}.
\end{t_mod}
\begin{figure}[t]
\centering
\includegraphics[width=0.7\columnwidth]{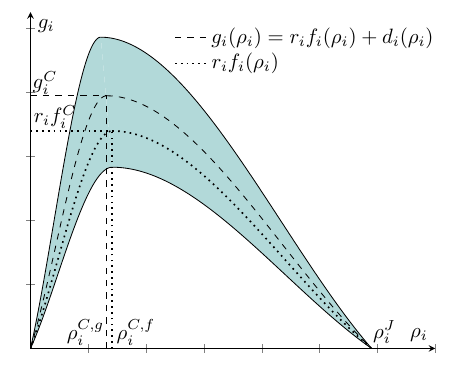}
\caption{General form of region~$i$ total inter-regional traffic flow $g_i(\rho_i)$ [veh/h], approximated by a nonlinear macroscopic fundamental diagram (MFD) area, due to flow variation and uncertainties\cite{MENELAOU2023}.  
\label{fig:tmfd}}
\end{figure}


\subsection{Problem Statement\label{sec:prob_state}}
The problem considered in this paper is the following:
\begin{prb}\label{Problem_1}
For the \Cref{str:t_mod}, develop a framework for the design of VAC schemes that:
\begin{enumerate}[(i)]
\item guarantee stability based on locally verifiable conditions,
\item are applicable on a broad range of MFD functions, following the description in \eqref{eq:g_i}, \eqref{eq:Lipsc_di},
\item rely on locally available information only,
\item are applicable to arbitrary (connected) traffic network {configurations}.
\end{enumerate}
\end{prb}

Conditions (i) and (ii) constitute the main goal of the proposed VAC approach, i.e., to ensure that the VAC scheme is supplemented by stability guarantees that are based on locally verifiable conditions characterised by robustness to modeling uncertainties (i.e. for a broad range of MFD functions).
It is also important for the VAC framework to operate in a decentralized manner and be applicable to any connected traffic network configuration.
Hence, conditions (iii), (iv) relate with scalability, i.e. the applicability of the proposed VAC approach to large-scale systems.

\section{Vehicular admission control schemes design\label{sec:con_dyn}}

The motivation for studying a class of VAC schemes follows from the high heterogeneity between regions in traffic networks.
In particular, different regions within large-scale traffic networks exhibit varying VAC capabilities due to differing infrastructure and user compliance levels.
As a result, there is a large variation in terms of VAC authority between traffic networks.
Considering a broad class of VAC dynamics enables suitable approaches for controlling both the transient behaviour and steady-state response of heterogeneous traffic networks.
Pointedly, these VAC schemes have the flexibility to be defined for fast timescales and then be altered at slower timescales, e.g. enabling different dynamic responses between seasons or days of the week.

We aim to design a class of decentralized VAC schemes that only utilize the in-region density, $\rho_i(t)$, since it is a direct measure for congestion that can be reliably estimated, as shown in\cite{WANG8283631,DARWISH2015337}.
This is done by considering the \Cref{str:t_mod} admitted vehicular demand, $u_i(t)$, as the output of the following independent VAC dynamics
\begin{subequations}\label{eq:con_dyn}
\begin{align}
\dot{x}_{\rho_i}(t)=\chi_{\rho_i}(x_{\rho_i}(t),-\rho_i(t)),\forall i\in\mathcal{N},\label{eq:con_dyn_1}\\
u_i(t)=y_{\rho_i}(x_{\rho_i}(t),-\rho_i(t)),\forall i\in\mathcal{N},
\end{align}
\end{subequations}
where $\chi_{\rho_i}:\mathbb{R}^{n_i}\times[-\rho_i^J,0]\to\mathbb{R}^{n_i}$ are locally Lipschitz and $y_{\rho_i}:\mathbb{R}^{n_i}\times[-\rho_i^J,0]\to\mathbb{R}$  are continuous for all $i\in\mathcal{N}$.
Note that \eqref{eq:con_dyn} can be region-wise heterogeneous, i.e., each region~$i$ can have different VAC dynamics.
It is assumed that for any constant input $\rho_i(t)=\bar{\rho}_i$ there exists a unique equilibrium state value $x_{\rho_i}^*$ and a corresponding output $u_i(t)=\bar{u}_i$ such that $x_{\rho_i}^*$ is a unique locally asymptotically stable equilibrium point of \eqref{eq:con_dyn}, i.e.
\begin{subequations}\label{eq:con_dyn_eq}
\begin{align}
\dot{x}_{\rho_i}(t)=0&=\chi_{\rho_i}(x^*_{\rho_i},-\bar{\rho}_i),\forall i\in\mathcal{N},\label{eq:con_dyn_eq_1}\\
u_i(t)=\bar{u}_i&=y_{\rho_i}(x_{\rho_i}^*,-\bar{\rho}_i),\forall i\in\mathcal{N},\label{eq:con_dyn_eq_2}
\end{align}
\end{subequations}
with the region of attraction of $x_{\rho_i}^*$ being denoted by $\hat{\mathcal{X}}_i$.

Additionally it is assumed that there exist static input-state characteristic maps $k^u_i:\mathbb{R}_+\to\mathbb{R},\forall i\in\mathcal{N},$ and corresponding input-output characteristic maps $k^{u,y}_i:\mathbb{R}\to\mathbb{R},\forall i$  such that
\begin{subequations}\label{eq:input_stat_map}
\begin{align}
k^u_i(-\bar{\rho}_i)=&x_{\rho_i}^*,\forall i\in\mathcal{N},\label{eq:input_stat_map_1}\\
k^{u,y}_i(-\bar{\rho}_i)=&y^u_i(k_i^u(-\bar{\rho}_i),-\bar{\rho}_i),\forall i\in\mathcal{N}.
\end{align}
\end{subequations}
We aim to characterize classes of VAC dynamics so that stability can be guaranteed for the equilibrium points of the overall feedback interconnection between \eqref{eq:con_dyn} and \Cref{str:t_mod}.

\subsection{Equilibrium points}
The equilibrium points of the overall feedback interconnection between \eqref{eq:con_dyn} and \Cref{str:t_mod} are defined next.  
\begin{defi}\label{def:eq}
The point $(\boldsymbol{x}_{\boldsymbol{\rho}}^*,\boldsymbol{\rho}^*)\in\mathbb{R}^{\sum_1^{\lvert\mathcal{N}\rvert} n_i}\times[-\rho_i^J,0]^{\lvert\mathcal{N}\rvert}$, is an equilibrium point for the overall feedback interconnection between \eqref{eq:con_dyn} and \Cref{str:t_mod} if it satisfies
{\begin{subequations}\label{eq:d_rho_eq}
\begin{align}
-w_{ii} g_i(\rho_i^*)-\sum_{l\in\mathcal{S}_i}w_{il}g_{i}(\rho_{i}^*)&\nonumber\\
+\sum_{j\in\mathcal{P}_i}w_{ji}g_{j}(\rho_{j}^*)
+u_i^*&=0,&\forall i\in\mathcal{N},\label{eq:d_rho_eq_1}\\
x_{\rho_i}^*&=k^u_i(-{\rho}^*_i),&\forall i\in\mathcal{N},\label{eq:d_rho_eq_2}\\
u_i^*&=k^{u,y}_i(-{\rho}^*_i),&\forall i\in\mathcal{N}\label{eq:u_eq}.
\end{align}
\end{subequations}}
\end{defi}
It is signified that a time frame is considered in which the VAC dynamics implicitly account for the external demand or other factors.
\begin{rem}\label{rem:int}
Onward we assume that there exists some equilibrium of the feedback interconnection between \eqref{eq:con_dyn} and \Cref{str:t_mod} satisfying \Cref{def:eq}.
Considering that the framework's target application is real-world vehicular networks, practical considerations, such that inputs are non-negative, should be taken into account on the feasibility of equilibrium points.
One way to ensure this is by suitably designing the equilibrium points $(x^*_{\rho_i},-\rho^*_i)$ and sets $\mathcal{X}_i$ and  $\mathcal{U}_i$ around these equilibria, as given in \Cref{def:passivity_def},  such that $u_i(t) \in [u_i^{\min},u_i^{\max}], t \geq 0$ where $0 \leq u_i^{\min} \leq u_i^{\max}$.
Since in the proof of \Cref{thm:1} it is showed that the trajectories of $(x_{\rho_i},-\rho_i)$ lie in a invariant subset of $\mathcal{X}_i \times  \mathcal{U}_i$, the steady-state values can be designed to satisfy $k^{u,y}_i(-\bar{\rho}_i)= y^u_i(k_i^u(-\bar{\rho}_i),-\bar{\rho}_i) \in [u_i^{\min}, u_i^{\max}]$. 
In this case, if Assumption~3 holds for the system with input $-\rho_i$ and output $u_i$, then \Cref{thm:1} follows directly.
Hence, the application of suitable VAC dynamics can ensure the applicability of the framework in a real life setting.
For example, suitable bounds on the trajectories of $u_i$ may follow by filtering the input $-\rho_i$ within \eqref{eq:con_dyn}, see example in \Cref{sec:u_adhe}.
\end{rem}

In practice, to ensure that the set-points are feasible, a network operator determines the network's operating point based on historical data or through extensive network analysis.


Density $\rho_i(t)$, is related via the static input-state characteristic maps $k_i^u(\rho_i)$ and the corresponding input-output characteristic maps $k_i^{u,y}(\rho_i)$, \eqref{eq:input_stat_map}, with the admitted vehicular demand $u_i(t)$, by the structure of the VAC dynamics \eqref{eq:con_dyn};
it affects/defines the action of the VAC dynamics \eqref{eq:con_dyn}, hence the steady-state behaviour of \Cref{str:t_mod}.
In this work a class of VAC dynamics satisfying \eqref{eq:con_dyn} will be analysed.


\subsection{Passive admission control schemes \label{sec:feed_con_asm}}

In this section, we propose suitable passivity properties, inspired from\cite{WEN1266772,KASIS7776980}, for the VAC dynamics, \eqref{eq:con_dyn}, to facilitate the analysis of their feedback interconnection with \Cref{str:t_mod}.

\begin{defi}\label{def:passivity_def}
The control dynamics, \eqref{eq:con_dyn}, are characterized as locally input strictly passive about the constant input values, $\bar{\rho}_i\in(0,\rho^J_i)$, and the corresponding steady state values, $x_{\rho_i}^*\in\mathbb{R}^{n_i}$, if there exist open neighbourhoods $\mathcal{X}_i$ of $x_{\rho_i}^*$ and $\mathcal{U}_i$ of $-\bar{\rho}_i$ and a continuously differentiable storage function $V_i(x_{\rho_i})$ such that for all $-{\rho}_i\in\mathcal{U}_i$ and $x_{\rho_i}\in\mathcal{X}_i$, the storage function $V_i(x_{\rho_i})$ satisfies the following
\begin{subequations}\label{eq:stor_fun}
\begin{align}
V_i&>0\text{ in }\mathbb{R}\backslash \{x_{\rho_i}^*\},V_i(x_{\rho_i}^*)=0,\label{eq:stor_fun_2}\\
\dot{V}_i&\leq(-{\rho}_i-(-\bar{\rho}_i))(u_i-\bar{u}_i)-\theta_i(-{\rho}_i-(-\bar{\rho}_i)),\label{eq:stor_fun_3}
\end{align}
\end{subequations}
where $\theta_i:\mathbb{R}\to\mathbb{R}_+$ is a positive definite function satisfying
\begin{align}\label{eq:phi_bound}
{\eta}_i({\rho}_i-\bar{\rho}_i)^2\leq\theta_i(-{\rho}_i-(-\bar{\rho}_i))
\end{align}
with ${\eta}_i\in\mathbb{R}_+{\setminus}\{0\}$ and $u_i$ is the output of \eqref{eq:con_dyn} with $\bar{u}_i=y_{\rho_i}(x_{\rho_i}^*,-\bar{\rho}_i)=k_i^{u,y}(-\bar{\rho}_i)$.
\end{defi}

This work considers control dynamics of the form in \eqref{eq:con_dyn} that satisfy the  local passivity conditions presented in \Cref{def:passivity_def}.
It is signified that \Cref{def:passivity_def} involves only the local VAC dynamics of each region, thus constitutes a decentralized condition.
Formally, the following assumption is made. 
\begin{asm}\label{asm:con_dyn}
For each $i\in\mathcal{N}$, the VAC dynamics, \eqref{eq:con_dyn}, are locally input strictly passive about their equilibrium values $(x_{\rho_i}^*,-{\rho}^*_i)$ according to \Cref{def:passivity_def}.
\end{asm}

\begin{rem}
The passivity properties of the control dynamics, \eqref{eq:con_dyn}, can be verified easily for general linear systems (see\cite[KYP Lemma]{KHALIL}, transfer function positive realness) and for static nonlinearities.
Explicit examples are presented in \Cref{sec:con_ex}.
Additionally, it is noted that \Cref{asm:con_dyn} allows the inclusion of a large class of control dynamics that satisfy the local passivity conditions detailed in \Cref{def:passivity_def}.
\end{rem}

\subsection{Integrator-passive admission control schemes \label{sec:feed_con_integr}}

An interesting class of VAC dynamics is described by the parallel interconnection of the dynamics in \eqref{eq:con_dyn} with an integrator. Such schemes enable to accurately track desired equilibrium points, that may be associated with the system's performance, at the presence of system uncertainty.
This class of systems is described by:
\begin{subequations}\label{eq:int}
\begin{align}
\dot{x}_{\rho_i}(t)&=\chi_{\rho_i}(x_{\rho_i}(t),-\rho_i(t)),\forall i\in\mathcal{N},\label{eq:int_1}\\
\tilde{u}_i(t)&=y_{\rho_i}(x_{\rho_i}(t),-\rho_i(t)),\forall i\in\mathcal{N},\label{eq:int_2}\\
\dot{z}_i(t)&=\frac{1}{\upsilon_i}(-\rho_i(t)-(-\tilde{\rho}_i)),i\in\mathcal{N},\label{eq:int_3}\\
u_i(t)&=\tilde{u}_i(t)+z_i(t),i\in\mathcal{N},\label{eq:int_4}
\end{align}
\end{subequations}
where $\chi_{\rho_i}:\mathbb{R}^{n_i}\times[-\rho_i^J,0]\to\mathbb{R}^{n_i}$ are locally Lipschitz and $y_{\rho_i}:\mathbb{R}^{n_i}\times[-\rho_i^J,0]\to\mathbb{R}$  are continuous for all $i\in\mathcal{N}$. 
For a constant input $\rho_i(t)=\bar{\rho}_i$ such that
\begin{align}\label{eq:int_eq_cond}
\dot{z}_i(t)&=0=\frac{1}{\upsilon_i}(-\bar{\rho}_i-(-\tilde{\rho}_i)),i\in\mathcal{N},
\end{align}
it is assumed that there exists an equilibrium state $x_{\rho_i}^*$ (the region of attraction of $x_{\rho_i}^*$ is again being denoted by $\hat{\mathcal{X}}_i$), with a corresponding output $\tilde{u}_i(t)={\tilde{u}}^*_i$ such that $(x_{\rho_i}^*,-\bar{\rho}_i,z_i^*)$ is a locally asymptotically stable equilibrium point of \eqref{eq:int}. 
Such an equilibrium corresponds to an integrator-passive admission control output ${u}^*_i$ given by
\begin{align}
{u}^*_i&={\tilde{u}}^*_i+z^*_i,i\in\mathcal{N}.\label{eq:int_5}
\end{align}
\begin{asm}\label{asm:con_dyn_int}
For each $i\in\mathcal{N}$, the VAC dynamics, \eqref{eq:int_1}-\eqref{eq:int_2}, are locally input strictly passive about their equilibrium values $(x_{\rho_i}^*,-{\rho}^*_i)$ according to \Cref{def:passivity_def}\footnote{Note that the use of \Cref{def:passivity_def} here implies replacing the notation of \eqref{eq:con_dyn} to that of \eqref{eq:int_1}-\eqref{eq:int_2}.}.
\end{asm}}

The following lemma demonstrates that, when the dynamics \eqref{eq:int_1}-\eqref{eq:int_2} are passive, then their parallel interconnection with an integrator is also passive.
\begin{lem}\label{lem:int}
For each $i\in\mathcal{N}$, consider the VAC dynamics given by \eqref{eq:int},
and consider an equilibrium such that \Cref{asm:con_dyn_int} hold. 
Then \eqref{eq:int} is locally input strictly passive about the considered equilibrium point.
\end{lem}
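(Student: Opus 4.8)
The plan is to exhibit a storage function for the augmented system \eqref{eq:int} by adding to the storage function $V_i$ of the passive subsystem \eqref{eq:int_1}--\eqref{eq:int_2} a quadratic term in the integrator state. Concretely, since by \Cref{asm:con_dyn_int} the dynamics \eqref{eq:int_1}--\eqref{eq:int_2} are locally input strictly passive with storage function $V_i(x_{\rho_i})$ satisfying \eqref{eq:stor_fun}, I would define the candidate storage function for \eqref{eq:int} as
\begin{equation}
W_i(x_{\rho_i},z_i) = V_i(x_{\rho_i}) + \frac{\upsilon_i}{2}\,(z_i - z_i^*)^2 .
\end{equation}
This is positive definite about $(x_{\rho_i}^*, z_i^*)$ and vanishes there, so the analogue of \eqref{eq:stor_fun_2} holds on a suitable neighbourhood.

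Next I would differentiate $W_i$ along trajectories of \eqref{eq:int}. Using \eqref{eq:int_3}, the integrator contributes $\upsilon_i (z_i - z_i^*)\dot{z}_i = (z_i - z_i^*)(-\rho_i - (-\bar{\rho}_i))$, noting that $\tilde{\rho}_i = \bar{\rho}_i$ follows from \eqref{eq:int_eq_cond}. Combining with the dissipation inequality \eqref{eq:stor_fun_3} for $V_i$, whose supply rate is expressed in terms of $\tilde{u}_i - \tilde{u}_i^*$, I obtain
\begin{equation}
\dot{W}_i \leq (-\rho_i - (-\bar{\rho}_i))(\tilde{u}_i - \tilde{u}_i^*) - \theta_i(-\rho_i-(-\bar{\rho}_i)) + (-\rho_i - (-\bar{\rho}_i))(z_i - z_i^*).
\end{equation}
The first and third terms combine, and using \eqref{eq:int_4} together with \eqref{eq:int_5}, i.e. $u_i - u_i^* = (\tilde{u}_i - \tilde{u}_i^*) + (z_i - z_i^*)$, the right-hand side collapses to $(-\rho_i - (-\bar{\rho}_i))(u_i - u_i^*) - \theta_i(-\rho_i-(-\bar{\rho}_i))$, which is exactly the input strict passivity inequality \eqref{eq:stor_fun_3} for the full system \eqref{eq:int} with the same $\theta_i$ (hence the same $\eta_i$ in \eqref{eq:phi_bound}). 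This establishes the claim.

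The main technical care — rather than a true obstacle — lies in the neighbourhood bookkeeping and in justifying that $W_i$ is a legitimate local storage function for \eqref{eq:int}: one must intersect the neighbourhoods $\mathcal{X}_i$, $\mathcal{U}_i$ from \Cref{def:passivity_def} with the region of attraction $\hat{\mathcal{X}}_i$ and choose the $z_i$-neighbourhood so that the composite state stays where \eqref{eq:stor_fun} is valid, and to invoke the assumed local asymptotic stability of $(x_{\rho_i}^*, -\bar{\rho}_i, z_i^*)$ to guarantee that such an invariant neighbourhood exists. I would also note explicitly that the equilibrium relation \eqref{eq:int_eq_cond} forces $\tilde{\rho}_i = \bar{\rho}_i$, so the reference used inside the integrator \eqref{eq:int_3} coincides with the passivity reference, which is what makes the cross terms telescope cleanly. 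Everything else is a routine chain-rule computation.
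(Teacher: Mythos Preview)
Your proposal is correct and follows essentially the same route as the paper: the same augmented storage function $V_i(x_{\rho_i})+\tfrac{\upsilon_i}{2}(z_i-z_i^*)^2$, the same use of \eqref{eq:int_eq_cond} to identify $\tilde{\rho}_i$ with the equilibrium density, and the same telescoping via $u_i-u_i^*=(\tilde{u}_i-\tilde{u}_i^*)+(z_i-z_i^*)$. The only cosmetic difference is that the paper adds and subtracts $(u_i-u_i^*)(-\rho_i-(-\rho_i^*))$ before simplifying, whereas you combine the $\tilde{u}_i$ and $z_i$ terms directly; the computations are equivalent.
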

\begin{proof}
See Appendix~\ref{sec:Ap2}.
\end{proof}

\section{Traffic Network Stability\label{sec:stab_cond}}
This section investigates the stability properties of the feedback interconnection between \Cref{str:t_mod} and the class of VAC dynamics of the form detailed in \eqref{eq:con_dyn} or \eqref{eq:int} that satisfy the assumptions presented in \Cref{sec:feed_con_asm,sec:feed_con_integr}.

\begin{thm}\label{thm:1}
Consider the feedback interconnection between \Cref{str:t_mod}, and the VAC dynamics \eqref{eq:con_dyn}.
Moreover, consider an equilibrium of \Cref{str:t_mod}, \eqref{eq:con_dyn}, following \Cref{def:eq}, with the vehicular density equilibria satisfying $-{\rho_i^*}\in(-\rho_i^J,0)$, $\forall i\in\mathcal{N}$, and where \Cref{asm:con_dyn} holds.
If there exist $\xi_{ji} \in \mathbb{R}_+{\setminus}\{0\},(i,j)\in\mathcal{E}$, such that
\begin{align}
{\eta}_i>&  v^{d,L}_i+r_i v_i^{L}+\sum_{j\in\mathcal{P}_i}\frac{a_{ji}}{2 \xi_{ji}}+\sum_{j\in\mathcal{S}_i}\frac{\xi_{ij}a_{ij}}{2},i\in\mathcal{N},\label{eq:eta}
\end{align}
holds, where $a_{ji}$ are given by
\begin{align}
a_{ji}=&w_{ji}(r_j   v^L_j+v^{d,L}_j),(i,j)\in\mathcal{E},\label{eq:a_ji}
\end{align}
then there exists an open neighbourhood $\bar{\mathcal{M}}$ of the considered equilibrium, such that solutions to \Cref{str:t_mod}, and \eqref{eq:con_dyn}, initiated in $\bar{\mathcal{M}}$ asymptotically converge to the set of equilibrium points.
\end{thm}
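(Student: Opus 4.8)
The plan is to construct a composite Lyapunov function for the feedback interconnection and show it is nonincreasing along trajectories in a suitable neighbourhood of the equilibrium, then invoke LaSalle's invariance principle. Concretely, I would take $V(\boldsymbol{x}_{\boldsymbol{\rho}},\boldsymbol{\rho}) = \sum_{i\in\mathcal{N}} \big( L_i V_i(x_{\rho_i}) + \tfrac{1}{2}(\rho_i - \rho_i^*)^2 \big)$, where $V_i$ are the storage functions guaranteed by \Cref{def:passivity_def} under \Cref{asm:con_dyn}, and the quadratic terms come from the traffic dynamics \eqref{eq:d_rho}. This $V$ is positive definite about the equilibrium on $\prod_i (\mathcal{X}_i \times [0,\rho_i^J])$ and radially well-behaved locally. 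The first step is therefore to fix the neighbourhood $\bar{\mathcal{M}}$ small enough that $x_{\rho_i}\in\mathcal{X}_i$, $-\rho_i\in\mathcal{U}_i$, and $\rho_i\in(0,\rho_i^J)$ for all $i$, so the passivity inequality \eqref{eq:stor_fun_3} and all Lipschitz bounds apply.

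Next I would differentiate. For the quadratic part, $\tfrac{1}{2}\frac{d}{dt}(\rho_i-\rho_i^*)^2 = (\rho_i-\rho_i^*)L_i^{-1}\big(-w_{ii}g_i(\rho_i)-\sum_{l\in\mathcal{S}_i}g_{il}(\rho_i)+\sum_{j\in\mathcal{P}_i}g_{ji}(\rho_j)+u_i\big)$; subtracting the equilibrium relation \eqref{eq:d_rho_eq_1} lets me rewrite this in terms of the deviations $g_i(\rho_i)-g_i(\rho_i^*)$, $g_j(\rho_j)-g_j(\rho_j^*)$ and $u_i - u_i^*$. Scaling by $L_i$ cancels the $L_i^{-1}$. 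For the storage part, \eqref{eq:stor_fun_3} gives $L_i\dot{V}_i \le L_i(-\rho_i+\bar\rho_i)(u_i-\bar u_i) - L_i\theta_i(\cdot)$; note the cross term $-(\rho_i-\rho_i^*)(u_i-u_i^*)$ appearing here is exactly the negative of the $u_i$-dependent cross term produced by the traffic-side derivative (here I am implicitly assuming $L_i$ is absorbed correctly, or that the storage inequality is used with the appropriately scaled supply rate; the paper's constants suggest the $L_i$ factors arrange so this cancellation is clean). So the interconnection term telescopes away, which is the whole point of the passivity-based design.

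What remains is a sum of "diagonal" dissipation terms $-L_i\eta_i(\rho_i-\rho_i^*)^2$ (via \eqref{eq:phi_bound}) minus self-flow and transfer-flow deviation terms. Using concavity/Lipschitz of $g_i = r_i f_i + d_i$ from \Cref{asm:d_prop}\eqref{asm_d:1} and \Cref{asm:f_prop}\eqref{asm:f_prop_2}, I bound $|g_i(\rho_i)-g_i(\rho_i^*)| \le (r_i v_i^L + v_i^{d,L})|\rho_i-\rho_i^*|$, which handles the $-w_{ii}$ self-term and the $\sum_{l\in\mathcal{S}_i}w_{il}$ terms, contributing a diagonal loss of at most $(r_i v_i^L + v_i^{d,L})(\rho_i-\rho_i^*)^2$ since $w_{ii}+\sum_{l}w_{il}=1$. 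The genuinely coupling terms are the cross-region products $(\rho_i-\rho_i^*)\cdot w_{ji}(g_j(\rho_j)-g_j(\rho_j^*))$ from predecessors; on each edge $(j,i)$ I apply Young's inequality $|ab| \le \tfrac{\xi_{ji}}{2}a^2 + \tfrac{1}{2\xi_{ji}}b^2$ with $a = |\rho_j-\rho_j^*|\cdot a_{ji}^{1/2}$-type splitting so that edge $(j,i)$ deposits $\tfrac{a_{ji}}{2\xi_{ji}}(\rho_i-\rho_i^*)^2$ on node $i$ and $\tfrac{\xi_{ji}a_{ji}}{2}(\rho_j-\rho_j^*)^2$ on node $j$ — matching exactly the two sums in \eqref{eq:eta} (the $j\in\mathcal{P}_i$ sum and the $j\in\mathcal{S}_i$ sum, after reindexing edges). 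Collecting the coefficient of $(\rho_i-\rho_i^*)^2$ gives precisely $-\big(\eta_i - v_i^{d,L} - r_i v_i^L - \sum_{j\in\mathcal{P}_i}\tfrac{a_{ji}}{2\xi_{ji}} - \sum_{j\in\mathcal{S}_i}\tfrac{\xi_{ij}a_{ij}}{2}\big)(\rho_i-\rho_i^*)^2$, which is strictly negative by hypothesis \eqref{eq:eta}. Hence $\dot V \le 0$, with equality only where all $\rho_i = \rho_i^*$; LaSalle then confines trajectories to the largest invariant set in $\{\boldsymbol{\rho}=\boldsymbol{\rho}^*\}$, on which the VAC dynamics relax to their own equilibria, giving convergence to the equilibrium set.

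The main obstacle I anticipate is the bookkeeping of the $L_i$ scaling factors so that the passivity cross term $(-\rho_i+\bar\rho_i)(u_i-\bar u_i)$ and the traffic-side cross term cancel exactly rather than up to a constant — this dictates whether one multiplies $V_i$ by $L_i$, rescales the supply rate, or redefines $\eta_i$; getting the supply-rate normalization consistent with \Cref{def:passivity_def} is the delicate point. A secondary technical issue is verifying that $\bar{\mathcal{M}}$ can be chosen as a sublevel set of $V$ that is both contained in $\prod_i(\hat{\mathcal{X}}_i\times\mathcal{U}_i)$ and forward-invariant, so that the local passivity inequality is valid along the whole trajectory; this is where \Cref{rem:int}'s discussion of invariant subsets of $\mathcal{X}_i\times\mathcal{U}_i$ and the boundedness afforded by $-\rho_i^*\in(-\rho_i^J,0)$ gets used. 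Everything else is routine application of Young's inequality and Lipschitz bounds.
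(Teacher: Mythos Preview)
Your approach is essentially identical to the paper's: the same composite Lyapunov function, the same cancellation of the cross term $(\rho_i-\rho_i^*)(u_i-u_i^*)$ via the passivity supply rate, the same Lipschitz bounds on $g_i$, the same Young-type splitting over edges, and the same LaSalle conclusion. The one point you flagged is resolved by putting the $L_i$ weight on the quadratic density term rather than on $V_i$, i.e.\ the paper takes $V=\sum_i\big(\tfrac{L_i}{2}(\rho_i-\rho_i^*)^2+V_i(x_{\rho_i})\big)$, so that $\tfrac{d}{dt}\tfrac{L_i}{2}(\rho_i-\rho_i^*)^2=(\rho_i-\rho_i^*)(\cdots+u_i)$ produces exactly the cross term that the unscaled $\dot V_i$ absorbs.
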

\begin{proof}
See Appendix~\ref{sec:Ap1}.
\end{proof}

As already mentioned in \Cref{sec:feed_con_integr} including integrator dynamics in VAC may enable improved performance, particularly when system dynamics are uncertain.
Hence an interesting extension of \Cref{thm:1} is the interconnection of \Cref{str:t_mod} with an integrator-passive VAC scheme, \eqref{eq:int}, as given below:
\begin{prop}\label{prop:int}
Consider the feedback interconnection between \Cref{str:t_mod}, and \eqref{eq:int}. 
Moreover, consider an equilibrium of \Cref{str:t_mod}, \eqref{eq:int}, satisfying $-{\rho_i^*}\in(-\rho_i^J,0)$, and let \Cref{asm:con_dyn_int} hold.
Then, if there exists $\xi_{ji} \in \mathbb{R}_+{\setminus}\{0\},(i,j)\in\mathcal{E}$, such that \eqref{eq:eta} holds, then there exists an open neighbourhood $\bar{\mathcal{M}}$ of the considered equilibrium, such that solutions to \Cref{str:t_mod}, \eqref{eq:int}, initiated in $\bar{\mathcal{M}}$ asymptotically converge to the set of equilibrium points.

\end{prop}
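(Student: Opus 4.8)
The plan is to reduce Proposition~\ref{prop:int} directly to Theorem~\ref{thm:1} by invoking Lemma~\ref{lem:int}, thereby avoiding a fresh passivity/Lyapunov construction. First I would observe that the feedback interconnection considered in the proposition is exactly the interconnection of \Cref{str:t_mod} with the composite VAC dynamics \eqref{eq:int}, where \eqref{eq:int} consists of the inner dynamics \eqref{eq:int_1}--\eqref{eq:int_2} placed in parallel with the integrator \eqref{eq:int_3}--\eqref{eq:int_4}. Since \Cref{asm:con_dyn_int} assumes that \eqref{eq:int_1}--\eqref{eq:int_2} are locally input strictly passive about $(x_{\rho_i}^*,-\rho_i^*)$ according to \Cref{def:passivity_def}, Lemma~\ref{lem:int} applies and yields that the \emph{full} system \eqref{eq:int} is itself locally input strictly passive about the corresponding equilibrium point $(x_{\rho_i}^*,-\rho_i^*,z_i^*)$, with some storage function $\tilde V_i$, some positive definite $\tilde\theta_i$, and (crucially) the \emph{same} excess-of-passivity constant $\eta_i$ appearing in the dissipation bound \eqref{eq:phi_bound}. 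The key point to verify here is that the integrator augmentation does not degrade $\eta_i$: the integrator contributes a nonnegative term $z_i(u_i-\bar u_i)$ reshuffling, but the strict input passivity margin $\eta_i$ of the inner block is preserved in the composite bound; I would state this explicitly, referencing the proof of Lemma~\ref{lem:int} in Appendix~\ref{sec:Ap2}.

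Next I would note that, once \eqref{eq:int} is recast as a system of the form \eqref{eq:con_dyn} — with augmented state $(x_{\rho_i},z_i)$, input $-\rho_i$, output $u_i$ — satisfying \Cref{asm:con_dyn} with the identical constants $\eta_i$, the hypotheses of Theorem~\ref{thm:1} are met verbatim: the traffic model is unchanged (so $v_i^{d,L}$, $r_i v_i^L$, $a_{ji}$, and the topology-dependent sums are identical), the density equilibria still satisfy $-\rho_i^*\in(-\rho_i^J,0)$ by assumption, and the small-gain-type condition \eqref{eq:eta} with the very same $\xi_{ji}$ is assumed to hold. Therefore Theorem~\ref{thm:1} applies to the interconnection of \Cref{str:t_mod} with \eqref{eq:int}, delivering an open neighbourhood $\bar{\mathcal M}$ of the equilibrium from which solutions asymptotically converge to the set of equilibrium points. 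I would close by remarking that the equilibrium set of the augmented system projects onto the equilibrium set characterised by \Cref{def:eq} (with the $z_i^*$ component pinned by \eqref{eq:int_3}, i.e.\ $\rho_i^*=\tilde\rho_i$ and $u_i^*=\tilde u_i^*+z_i^*$), so convergence to the augmented equilibrium set is convergence in the sense stated.

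The main obstacle I anticipate is the bookkeeping around the storage function and the region of attraction when composing the two results: Lemma~\ref{lem:int} is stated as ``locally input strictly passive about the considered equilibrium point,'' but Theorem~\ref{thm:1} additionally relies (through \Cref{def:passivity_def} and the surrounding assumptions in \Cref{sec:con_dyn}) on the existence of well-defined input--state and input--output characteristic maps $k_i^u,k_i^{u,y}$ for the augmented dynamics and on local asymptotic stability of the augmented equilibrium for constant $\rho_i$. I would therefore need to check that the assumptions imposed in \Cref{sec:feed_con_integr} (the assumed locally asymptotically stable equilibrium $(x_{\rho_i}^*,-\bar\rho_i,z_i^*)$ of \eqref{eq:int} for constant input, with region of attraction $\hat{\mathcal X}_i$, and the integrator equilibrium condition \eqref{eq:int_eq_cond} forcing $\bar\rho_i=\tilde\rho_i$) supply precisely the structural ingredients that Theorem~\ref{thm:1} assumed about \eqref{eq:con_dyn}. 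This is essentially a matter of matching notation and confirming that the augmented system inherits the required characteristic maps and invariant neighbourhoods; once that correspondence is made explicit, the proof is a one-line appeal to Lemma~\ref{lem:int} followed by Theorem~\ref{thm:1}, and I would present it as such with a pointer to Appendix~\ref{sec:Ap2} for the passivity preservation and to Appendix~\ref{sec:Ap1} for the stability argument.
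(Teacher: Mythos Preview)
Your proposal is correct and follows essentially the same route as the paper: invoke \Cref{asm:con_dyn_int} so that \Cref{lem:int} gives local input strict passivity of the augmented dynamics \eqref{eq:int} with the same $\eta_i$, then apply \Cref{thm:1} verbatim. The paper's proof is in fact even terser than your outline, simply noting that \Cref{asm:con_dyn_int} allows \Cref{lem:int} to hold and that stability then ``trivially follows from \Cref{thm:1}'' once the equilibrium satisfies \eqref{eq:d_rho_eq_1}; the bookkeeping you flag about characteristic maps and regions of attraction is absorbed into the standing assumptions of \Cref{sec:feed_con_integr}.
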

\begin{proof}
\Cref{asm:con_dyn_int} allows \Cref{lem:int} to hold.
Under these conditions, the stability of the feedback interconnection between \Cref{str:t_mod}, and \eqref{eq:int} trivially follows from \Cref{thm:1}, if $(x_{\rho_i}^*,-{\rho}^*_i,z_i^*)$ satisfy \eqref{eq:d_rho_eq_1} of \Cref{def:eq}.
\end{proof}
\begin{rem}\label{rem:sp}
It should be noted that the ability to design VAC dynamics from a rich class of possible schemes enables to incorporate additional characteristics in the control design associated with the traffic network performance.
In particular,  designing the static maps of the VAC dynamics may enable to incorporate suitable optimality considerations on the system's equilibrium values, such as operating at free-flow conditions or maximizing throughput.
Moreover, suitable VAC scheme designs, adhering to the conditions of \Cref{thm:1}, may optimize the transient  behaviour of traffic networks.
\Cref{prop:int} gives further flexibility in the design of VAC schemes. 
For example, it enables effective steady state set-point regulation under unknown traffic network dynamics.
These highlight the important advantages of enabling a large class of VAC schemes and the  flexibility and broad applicability of the approach.
Another important point is that the conditions in \Cref{thm:1} rely on locally verifiable criteria.
Hence, for designing the VAC scheme for each region, only local information is needed, making the scheme applicable to large-scale networks. 
Moreover, since no assumption, other than connectivity, is made on the network topology, the presented results hold for any connected network configuration.
Although the passivity-based Lyapunov stability framework for large-scale systems is well established, this work applies the approach to transportation systems, a domain in which it has not been previously applied or demonstrated.
\end{rem}

Concluding, since the developed framework for the design of VAC schemes (i) is supplemented by stability guarantees that are based on locally verifiable conditions, (ii) is applicable to a broad range of MFD functions, (iii) employs locally available information only, and (iv) is applicable to arbitrary connected traffic network configurations, it satisfies all the requirements detailed in \Cref{Problem_1}.


\section{Examples of Vehicular admission control Dynamics\label{sec:con_ex}} 
To demonstrate the applicability of the presented analysis,  this section provides examples of VAC dynamics that satisfy the developed conditions. 
It should be noted that the presented stability results hold when different VAC schemes are implemented at different regions.
Moreover it is assumed that the controllers do not hit the actuation bounds.

\subsection{Decentralized proportional control regulation\label{sec:con_ex_1}} 
The first example is that of a decentralized proportional-control VAC scheme of the following form
\begin{align}\label{eq:con_prop}
u_i(t)=[c_i+\eta_i(-\rho_i)]_0^{u_{i}^{\max}},c_i,\eta_i\in\mathbb{R}_+{\setminus}\{0\}, i\in\mathcal{N}.
\end{align}

This controller is composed by the constant term $c_i$ and a proportional term $\eta_i(-\rho_i)$.
The intuition behind this controller is that a range of local equilibrium values can be attained by proper selection of the constants $c_i,\eta_i\in\mathbb{R}_+\setminus\{0\}$ since $c_i=u_i^*+\eta_i\rho_i^*$ and at steady state \eqref{eq:con_prop} is equivalent to
\begin{align}\label{eq:con_prop*}
u_i(t)=[u_i^*-\eta_i(\rho_i-\rho_i^*)]_0^{u_{i}^{\max}}, i\in\mathcal{N}.
\end{align}
Hence, $c_i,\eta_i\in\mathbb{R}_+\setminus\{0\}$ can be selected by taking into account optimality considerations.

It is signified that these dynamics do not have an internal state, i.e. according to the structure documented in \eqref{eq:con_dyn}, $\dot{x}_{\rho_i}(t)=0$.
It is easy to see that the control dynamics satisfy the conditions documented {in} \Cref{def:passivity_def} and \Cref{asm:con_dyn}. 
Hence, \Cref{thm:1} can be employed to deduce stability of the feedback interconnection between \Cref{str:t_mod} and \eqref{eq:con_prop}, i.e. if $\eta_i$ satisfies \eqref{eq:eta} $\forall i\in\mathcal{N}$.


\subsection{Decentralized proportional control regulation with static non-linearity \label{sec:con_ex_2}} 
 
The second decentralized VAC example to be considered has the same VAC structure with \eqref{eq:con_prop} but with an added static non-linearity term as follows
\begin{align}\label{eq:con_prop_non}
u_i(t)=[c_i+\eta_i(-\rho_i)-\phi_i(\rho_i)]_0^{u_{i}^{\max}},c_i,\eta_i\in\mathbb{R}_+{\setminus}\{0\},
\end{align}
where $\phi_i:[0,\rho_i^J]\to\mathbb{R}_+$ is a monotonically increasing, Lipschitz continuous function, $\forall i\in\mathcal{N}$. 

The first two terms of the controller allow a range of local equilibrium values to be attained while the static nonlinearity term has a softening action as  vehicle density increases.
Specifically, the monotonically increasing function causes the third term to decrease as $\rho_i$ increases (a static vanishing nonlinearity). 
This implies that the input value decreases to help protect the network from congestion as the density rises.

Using that $\phi_i$ is Lipschitz and monotonically increasing it is easy to see that the VAC dynamics satisfy \Cref{asm:con_dyn}.
Thus, via \Cref{thm:1} stability of the feedback interconnection between \Cref{str:t_mod} and \eqref{eq:con_prop_non} can be deduced if $\eta_i$ satisfies \eqref{eq:eta} $\forall i\in\mathcal{N}$.

\subsection{Decentralized first-order control dynamics\label{sec:con_ex_3}} 
The third VAC scheme to be considered is characterized by first-order dynamics with a proportional term
\begin{subequations}\label{eq:con_1st_ord}
\begin{align}
\dot{x}_{\rho_i}(t)=&-{\tau_i}^{-1}(x_{\rho_i}(t)-\gamma_i(-\rho_i(t))-c_i),i\in\mathcal{N},\label{eq:con_1st_ord_1}\\
u_i(t)=&x_{\rho_i}(t)+\eta_i(-\rho_i(t)),i\in\mathcal{N},\label{eq:con_1st_ord_2}
\end{align}
\end{subequations}
where $\tau_i,\gamma_i,c_i,\eta_i\in\mathbb{R}_+{\setminus}\{0\},\forall i\in\mathcal{N}$.

The intuition behind this controller is that a smoothing control action is achieved (hence convergence to the systems equilibrium) dictated by the transient response of the VAC dynamics \eqref{eq:con_1st_ord_1} that govern the value of the internal state, $x_{\rho_i}(t)$ as it acts in conjunction with the proportional term $\eta_i(-\rho_i(t))$.
Depending on the available means/actuators used for the practical implementation of the control action the transient response could be characterised by a delay in the change in density.
This VAC scheme can capture this behaviour and enables more efficient traffic regulation.

These control dynamics satisfy the conditions given in \Cref{def:passivity_def}, and \Cref{asm:con_dyn} with storage function $V_i(x_{\rho_i})={\tau_i}(2\gamma_i)^{-1}{(x_{\rho_i}-x_{\rho_i}^*)^2}$ 
since the derivative of the storage function yields
\begin{subequations}
\begin{align}
\dot{V}_i=&(u_i-u_i^*)(-\rho_i-(-\rho_i^*))-(u_i-u_i^*)(-\rho_i-(-\rho_i^*))\nonumber\\
&-{\gamma_i}^{-1}(x_{\rho_i}-x_{\rho_i}^*)(x_{\rho_i}-\gamma_i(-\rho_i)-c_i)\\
\leq&(u_i-u_i^*)(-\rho_i-(-\rho_i^*))-\eta_i(-\rho_i-(-\rho_i^*))^2\label{eq:diff_vi_1}.
\end{align}
\end{subequations}
The feedback interconnection stability between \Cref{str:t_mod} and \eqref{eq:con_1st_ord} can be deduced via \Cref{thm:1}.

\subsection{Second-order linear control dynamics\label{sec:con_ex_4}} 
The fourth VAC scheme to be considered is characterized by second-order linear control dynamics 
\begin{subequations}\label{eq:con_2nd_ord}
\begin{align}
\boldsymbol{x}_{\rho_i}=&\mathbf{y}_{\rho_i}+\mathbf{c}_i\label{eq:con_2nd_ord_3},i\in\mathcal{N},\\
\dot{\boldsymbol{x}}_{\rho_i}=&
\begin{bmatrix}
-\frac{1}{\tau_{i}}&0\\
\frac{1}{\kappa_i}&-\frac{1}{\kappa_i}
\end{bmatrix}\boldsymbol{x}_{\rho_i}
+\begin{bmatrix}
\frac{1}{\tau_i}\\
0
\end{bmatrix}(-\rho_i),i\in\mathcal{N},\label{eq:con_2nd_ord_1}\\
u_i=&\begin{bmatrix}
0&1
\end{bmatrix}\boldsymbol{x}_{\rho_i}+\begin{bmatrix}
\eta_i
\end{bmatrix}(-\rho_i),i\in\mathcal{N},\label{eq:con_2nd_ord_2},
\end{align}
\end{subequations}
where a change of variables takes place since this VAC scheme also utilizes constant terms in the form of the vector $\mathbf{c}_i\in\mathbb{R}^2_+$ and $\tau_i,\kappa_i,\eta_i\in\mathbb{R}_+{\setminus}\{0\},\forall i\in\mathcal{N}$.
This controller captures the fact that control decision takes time since the constants $\tau_i,\kappa_i$ represent lags in actuation and demand variation for the VAC dynamics.

These control dynamics can be expressed as a general linear system with a state-space realization parametrized by the following matrices
\begin{align}\label{eq:con_gen_ord}
\mathbf{A}=
\begin{bmatrix}
-\frac{1}{\tau_i}&0\\
\frac{1}{\kappa_i}&-\frac{1}{\kappa_i}
\end{bmatrix},
\mathbf{B}=
\begin{bmatrix}
\frac{1}{\tau_i}\\
0
\end{bmatrix},
\mathbf{C}=
\begin{bmatrix}
0&1
\end{bmatrix},
\mathbf{D}=
\begin{bmatrix}
\eta_i
\end{bmatrix}.
\end{align}
Depending on the values of the parameters,  if ($\mathbf{A}$, $\mathbf{B}$) are controllable, and assuming that \eqref{eq:eta} holds, the conditions given in \Cref{def:passivity_def}, and \Cref{asm:con_dyn} can be efficiently verified by means of a linear matrix inequality (LMI) using the KYP Lemma\cite{KHALIL}.
Specifically, if there exist matrices $\mathbf{P}\in\mathbb{R}^{3\times3},\mathbf{M}\in\mathbb{S}^{2}$ such that 
\begin{subequations}
\begin{align}
\begin{bmatrix}
\mathbf{1}&\mathbf{0}\\
\mathbf{A}&\mathbf{B}
\end{bmatrix}^T
\begin{bmatrix}
\mathbf{0}&\mathbf{M}\\
\mathbf{M}&\mathbf{0}
\end{bmatrix}
\begin{bmatrix}
\mathbf{1}&\mathbf{0}\\
\mathbf{A}&\mathbf{B}
\end{bmatrix}+\mathbf{P}\prec0,\\
\mathbf{P}=-
\begin{bmatrix}
\mathbf{0}&\mathbf{1}\\
\mathbf{C}&\mathbf{D}
\end{bmatrix}^T
\begin{bmatrix}
\mathbf{Q}&\mathbf{S}\\
\mathbf{S}^T&\mathbf{R}
\end{bmatrix}
\begin{bmatrix}
\mathbf{0}&\mathbf{1}\\
\mathbf{C}&\mathbf{D}
\end{bmatrix},
\end{align}
\end{subequations}
with $\mathbf{Q},\mathbf{R}\in\mathbb{S}^{1},\mathbf{S}\in\mathbb{R}$, then the feedback interconnection is strictly passive with a storage function equal to $V=\boldsymbol{x}_{\rho_i}^T\mathbf{M}\boldsymbol{x}_{\rho_i}$\cite{Scherer2011LinearMI}.
As an example, we used CVX\cite{cvx,gb08} to find $\mathbf{P}$, and $\mathbf{M}$ for the parameters documented in \Cref{tab:con_par} (see VAC scheme 4).
The solver yielded
\begin{equation}
\mathbf{M} =
\begin{bmatrix}
   0.403  &  0.028\\
    0.028  &  0.022
\end{bmatrix},
\mathbf{P} =
\begin{bmatrix}
    120.5804 &  -0.0741\\
   -0.0741 & 112.4012
\end{bmatrix},
\end{equation}
and like in the other examples, the stability of the overall feedback interconnection between \Cref{str:t_mod} and \eqref{eq:con_2nd_ord} can be inferred via \Cref{thm:1}.


\subsection{Integrator-passive stabilization to a desired operating point \label{sec:set_point_stabilization}}
In a realistic setting a set point usually associated with optimal steady-state behaviour like maximum vehicular throughput or vehicular traffic network operation in a congestion-free state is desirable.
However, to perform this task, the formulation of an optimization problem that aims to optimize an application specific metric (e.g., maximizing the total vehicular throughput) is needed.
Since modelling uncertainty is present in this work, the formulation of such an optimization problem is not trivial.
Alternatively the desired set point can be inferred from historical data using prior knowledge of the system steady-state behaviour and then set integral dynamics to reach this desired set point.
As already mentioned in \Cref{rem:sp}, \Cref{prop:int} allows for steady state set-point regulation if due to uncertainty VAC parameter selection is difficult.

The flexibility of the approach is showcased next by considering the addition of an integrator with the first-order VAC dynamics of \Cref{sec:con_ex_3} as follows
\begin{subequations}\label{eq:con_1st_ord_int}
\begin{align}
\dot{x}_{\rho_i}(t)=&-{\tau_i}^{-1}(x_{\rho_i}(t)-\gamma_i(-\rho_i(t))-c_i),i\in\mathcal{N},\label{eq:con_1st_ord_int_1}\\
\tilde{u}_i(t)=&x_{\rho_i}(t)+\eta_i(-\rho_i(t)),i\in\mathcal{N},\label{eq:con_1st_ord_int_2}\\
\dot{z}_i(t)=&\frac{1}{\upsilon_i}(-\rho_i(t)-(-\rho_i^*)),i\in\mathcal{N},\label{eq:con_1st_ord_int_3}\\
u_i(t)=&\tilde{u}_i(t)+z_i(t),i\in\mathcal{N}.\label{eq:con_1st_ord_int_4}
\end{align}
\end{subequations}
The storage function from \Cref{sec:con_ex_3}, is modified by appending an additional term as follows 
$
V_i(x_{\rho_i},z_i)=\frac{\tau_i}{2\gamma_i}{(x_{\rho_i}-x_{\rho_i}^*)^2}+\frac{\upsilon_i}{2}(z_i-z_i^*)^2.\label{eq:vi_1_int}
$
Its derivative yields the same result as in \eqref{eq:diff_vi_1}.
The stability of the feedback interconnection between \Cref{str:t_mod} and \eqref{eq:con_1st_ord_int} can be deduced via \Cref{prop:int}, highlighting the flexibility and broad applicability of the proposed approach.

\subsection{VAC dynamics that respect upper and lower bound input constraints \label{sec:u_adhe}}

The next scheme to be considered is motivated by \cite{kasis2017primary} and allows to ensure that $u_i(t) \in [u_i^{\min},u_i^{\max}]$.
Consider the following input filter
\begin{equation}\label{set_point_dynamics}
p^c_i(\rho_i) = 
\begin{cases} p^{c,max}_i, \;\; \rho_i < \rho^{th,1}_i \\
-\gamma_{i}(\rho_i - \rho_i^{th,1}) + p^{c,max}_i,\; \rho^{th,1}_i  \leq  \rho_i <  \rho^{th,2}_i \\
p^{c,min}_i, \;\; \rho_i \geq \rho^{th,2}_i
\end{cases}
\end{equation}
where $\gamma^c_i \geq 0$ and $p^{c,min}_i = p^{c,max}_i -\gamma^c_{i}(\rho_i^{th,2} - \rho_i^{th,1})$.
Moreover, consider the input dynamics described below,
\begin{subequations}\label{sys_u}
\begin{align}
    u_i &= u_{1,i} + u_{2,i} + c_{i} \\
    u_{1,i}(s) &= G_i(s) p^c_i(s) \\
    u_{2,i} &= -\beta^c_i \rho_i, 
\end{align}
\end{subequations}
where $\beta^c_i > 0$ and $c_{i}$ is a design constant. 
Moreover, $G_i(s)$ describes the dynamics of the system with input $p^c_i$ and output $u_{1,i}$ in the Laplace domain, as
\begin{equation}\label{sys_G}
G_i(s) = K_i \frac{(1 + sT_{1,i})}{(1 + sT_{2,i})(1 + sT_{3,i})}
\end{equation}
where $K_i, T_{1,i} , T_{2,i}$, and $T_{3,i}$ are positive  constants.
For this system, consider that $\rho_i \in (\tilde{\rho}_i, \tilde{\tilde{\rho}}_i)$, e.g., $\tilde{\rho}_i = 0$ and $\tilde{\tilde{\rho}}_i = \rho^J_i$. 
These dynamics can be designed such that $u_i(t) \in [u_i^{\min},u_i^{\max}], t \geq 0$. 
In particular, it can be shown that the trajectories of $u_i(t)$ can be bounded by  evaluating the $\mathcal{L}_1$ norm of $G_i$ and noting that both $\rho_i$ and $p^c_i$ lie in bounded sets. 
Then, given the description of $u_{1,i}$ and $u_{2,i}$, trajectory bounds, given feasible initial conditions, may be obtained through suitable design of the constants $p^{c,min}_i, p^{c,max}_i, \gamma^c_i,$ and $c_{i}$.
Conditions that enable the passivity of the above system are presented in the following lemma, proven in the appendix.
\begin{lem}\label{lemma_passivity}
Consider the input dynamics described by \eqref{sys_u}, where $u_{1,i}$ follows from \eqref{set_point_dynamics}, \eqref{sys_G}.
Then the system with input $-\rho_i$ and output $u_i$ is input strictly passive about any equilibrium, if \eqref{set_point_dynamics} satisfies $\gamma^c_{i} < \beta^c_i/K_i $.
\end{lem}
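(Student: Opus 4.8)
My plan is to verify the dissipation inequality of \Cref{def:passivity_def} in incremental coordinates around a generic equilibrium, which simultaneously yields the ``about any equilibrium'' conclusion because the constants produced along the way are uniform in the equilibrium. Fix a constant $-\bar\rho_i$; since $G_i(s)$ in \eqref{sys_G} is strictly proper, Hurwitz and satisfies $G_i(0)=K_i$, the corresponding equilibrium of \eqref{sys_u} is $\bar u_{1,i}=K_i p^c_i(\bar\rho_i)$, $\bar u_{2,i}=-\beta^c_i\bar\rho_i$, $\bar u_i=\bar u_{1,i}+\bar u_{2,i}+c_{i}$. Writing $e_i:=-\rho_i-(-\bar\rho_i)$, $\tilde p_i:=p^c_i(\rho_i)-p^c_i(\bar\rho_i)$ and $\tilde u_i:=u_i-\bar u_i$, we have $\tilde u_i=G_i(s)[\tilde p_i]+\beta^c_i e_i$, and the target is a $C^1$ storage function $V_i\ge0$, vanishing at the equilibrium, with $\dot V_i\le e_i\tilde u_i-\eta_i e_i^{\,2}$ for some $\eta_i>0$.

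The first ingredient is the static filter \eqref{set_point_dynamics}: $p^c_i(\cdot)$ is continuous, piecewise affine, and non-increasing with slope in $[-\gamma^c_i,0]$, so the induced map $e_i\mapsto\tilde p_i$ is a monotone, slope-restricted, sector-$[0,\gamma^c_i]$ nonlinearity, i.e. $\tilde p_i e_i\ge0$ and $\tilde p_i^{\,2}\le\gamma^c_i\,\tilde p_i e_i$. The second ingredient is the pole--zero structure of $G_i$: its poles ($-1/T_{2,i},-1/T_{3,i}$) and zero ($-1/T_{1,i}$) are real and negative, $G_i(0)=K_i$, and a short computation gives $\mathrm{Re}\,G_i(j\omega)/K_i=[1+\omega^2(T_{1,i}T_{2,i}+T_{1,i}T_{3,i}-T_{2,i}T_{3,i})]/[(1+\omega^2T_{2,i}^2)(1+\omega^2T_{3,i}^2)]\ge-1$ for all $\omega$ (the inequality reduces to a polynomial in $\omega^2$ with positive coefficients, using $T_{2,i}^2+T_{3,i}^2\ge T_{2,i}T_{3,i}$); hence $\mathrm{Re}\,G_i(j\omega)\ge-K_i$.

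I would then take a minimal state-space realization $(\mathbf A_i,\mathbf B_i,\mathbf C_i,0)$ of $G_i$ with deviation state $\tilde{\mathbf x}_i$ (so $G_i(s)[\tilde p_i]=\mathbf C_i\tilde{\mathbf x}_i$, $\dot{\tilde{\mathbf x}}_i=\mathbf A_i\tilde{\mathbf x}_i+\mathbf B_i\tilde p_i$) and look for a storage function $V_i=\tilde{\mathbf x}_i^{T}\mathbf P_i\tilde{\mathbf x}_i$, $\mathbf P_i\succ0$, augmented when necessary by a Zames--Falb/Popov multiplier term permitted by the monotonicity of $\tilde p_i$. Imposing $\dot V_i-e_i\tilde u_i+\eta_i e_i^{\,2}\le0$ whenever the sector constraint $\gamma^c_i\tilde p_i e_i-\tilde p_i^{\,2}\ge0$ holds and applying the S-procedure turns the requirement into an LMI in $(\mathbf P_i,\lambda_i,\eta_i)$; by the KYP lemma this LMI is feasible iff an associated frequency-domain inequality holds. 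The remaining step is to show that this inequality is satisfied whenever $\gamma^c_i<\beta^c_i/K_i$, using $\mathrm{Re}\,G_i(j\omega)\ge-K_i$ together with the real-rootedness of $G_i$ (which constrains the shape of its Nyquist plot and hence the admissible multiplier). Since all the constants are independent of $\bar\rho_i$, a feasible $V_i$ together with $\theta_i(e_i)=\eta_i e_i^{\,2}$ certifies input strict passivity about any equilibrium, as claimed.

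The main obstacle is precisely this last step: the contribution of $G_i$ to the supply rate must be controlled through its full dynamics, not merely its $\mathcal H_\infty$ gain, so as to land exactly on the DC-gain constant $K_i$ in the threshold. A crude $\mathcal L_2$-gain bound on $G_i$ (or a loop transformation combined with it) only delivers the more conservative $\gamma^c_i<\beta^c_i/\|G_i\|_\infty$, with $\|G_i\|_\infty\ge K_i$ and strict for some choices of $T_{1,i},T_{2,i},T_{3,i}$; recovering $K_i$ genuinely requires exploiting the monotonicity of $p^c_i$ through the multiplier and the real pole--zero structure of $G_i$, and checking feasibility of the resulting LMI/frequency-domain condition is the technical heart of the argument.
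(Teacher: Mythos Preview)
The paper's proof takes a much shorter route than yours. It simply observes that (i) the incremental map $\rho_i-\rho_i^*\mapsto p^c_i(\rho_i)-p^c_i(\rho_i^*)$ has $\mathcal L_2$-gain at most $\gamma^c_i<\beta^c_i/K_i$, (ii) $G_i$ has $\mathcal L_2$-gain $K_i$, so the cascade from $\rho_i-\rho_i^*$ to $u_{1,i}-u_{1,i}^*$ has gain strictly below $\beta^c_i$, and (iii) by \cite[Prop.~1]{kasis2016primary} the parallel addition of the static feedthrough $\beta^c_i(-\rho_i)$ then yields input strict passivity. No state-space realization, no S-procedure, no multipliers.

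You are right that step (ii) is not automatic for \eqref{sys_G}: $\|G_i\|_\infty=K_i$ is equivalent to $T_{1,i}^{2}\le T_{2,i}^{2}+T_{3,i}^{2}$, which the paper does not list as a hypothesis (it does hold for the numerical values used in the simulations). With that side condition granted, the paper's argument is complete and your KYP/multiplier apparatus is unnecessary.

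Your own proposal, however, has a genuine gap precisely where you flag it. You set up the S-procedure reduction but never establish that the resulting LMI (equivalently, the frequency-domain inequality) is feasible under the bare hypothesis $\gamma^c_i<\beta^c_i/K_i$. The bound $\mathrm{Re}\,G_i(j\omega)\ge -K_i$ that you correctly derive handles only the linear extreme $\tilde p_i=\gamma^c_i e_i$ (it gives $\mathrm{Re}[\gamma^c_i G_i(j\omega)+\beta^c_i]>0$); it does not by itself produce a single quadratic storage function valid across the whole sector, and ``augmenting by a Zames--Falb/Popov multiplier when necessary'' is the entire argument, not a side remark. Unless you actually exhibit the multiplier and verify feasibility, the proof is incomplete. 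If your goal is to remove the implicit side condition on the $T_{k,i}$, that step must be carried out in full; otherwise, impose $T_{1,i}^{2}\le T_{2,i}^{2}+T_{3,i}^{2}$ and follow the paper's two-line gain argument.
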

\begin{proof}
See Appendix~\ref{sec:Ap3}.
\end{proof}
Next, the VAC dynamics examples from \Cref{sec:con_ex} will be employed on a representative simulation of a 6 and a 20-region vehicular traffic network to demonstrate the advantages of the proposed approach.

\section{Simulation Results\label{sec:num_res}}

This section presents numerical simulations, conducted on a 6-region network and a 20-region network,  under varying disturbance and uncertainty conditions. 
The main result of this work, \Cref{thm:1}, is demonstrated through the 6-region network, while the scalability and robustness of the proposed approach under uncertainty are  demonstrated through simulations on the 20-region network. 
The section concludes with a brief discussion on simulations that explore the sensitivity of the analytic stability result to controller parameter variations.

First, to illustrate the effectiveness of this work, two representative simulations using a 6-region traffic network are conducted aiming to illustrate the validity of \Cref{thm:1}, i.e. the stable network behaviour under the action of four different VAC schemes satisfying \Cref{def:passivity_def} and \Cref{asm:con_dyn}, in the presence of modeling uncertainty. 

\begin{figure}[t]
\centering
\begin{subfigure}{1\columnwidth}
\centering
\begin{tikzpicture}
\draw (0, 0) node[inner sep=0] {\includegraphics[trim={4.5cm 11.5cm 4.5cm 11.5cm},clip,width=0.55\columnwidth]{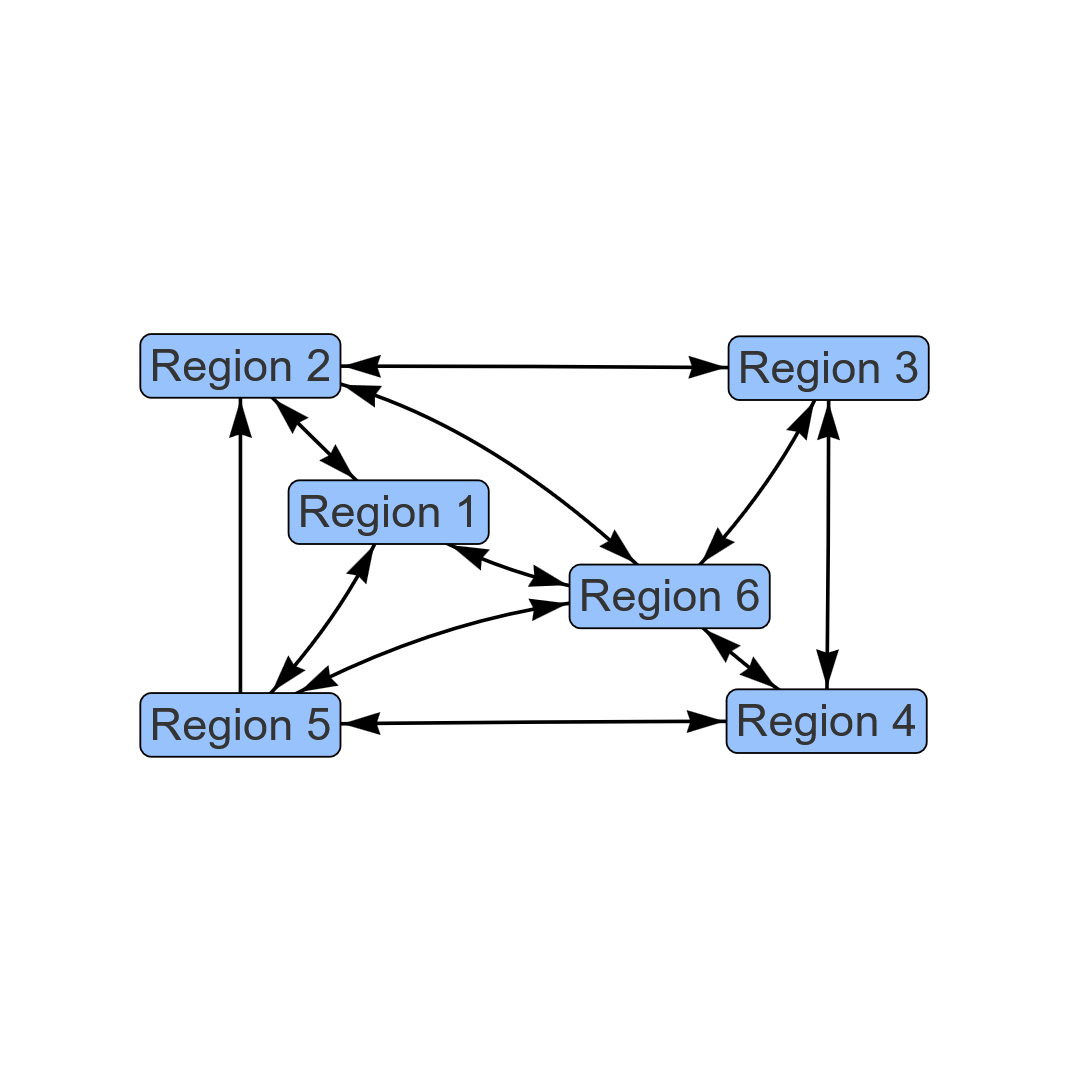}};
\draw (3.8, 1.20) node[rotate=0] {\small $\mathcal{P}_{1}=\mathcal{S}_{1}=\{2,5,6\}$};
\draw (3.57, 0.85) node[rotate=0] {\small $\mathcal{P}_{2}=\{1,3,5,6\}$};
\draw (3.44, 0.5) node[rotate=0] {\small $\mathcal{S}_{2}=\{1,3,6\}$};
\draw (3.82, 0.15) node[rotate=0] {\small $\mathcal{P}_{3}=\mathcal{S}_{3}=\{2,4,6\}$};
\draw (3.82, -0.2) node[rotate=0] {\small $\mathcal{P}_{4}=\mathcal{S}_{4}=\{3,5,6\}$};
\draw (3.44, -0.55) node[rotate=0] {\small $\mathcal{P}_{5}=\{1,4,6\}$};
\draw (3.61, -0.9) node[rotate=0] {\small $\mathcal{S}_{5}=\{1,2,4,6\}$};
\draw (4.15, -1.25) node[rotate=0] {\small $\mathcal{P}_{6}=\mathcal{S}_{6}=\{1,2,3,4,5\}$};
\end{tikzpicture}
\caption{Network predecessor-successor connectivity. Left-to-right: Node connectivity via directed edges, predecessor-successor sets.}
\label{fig:mfd-6a}
\end{subfigure}\\
\vspace{-2mm}%
\begin{subfigure}{1\columnwidth}
\centering
\includegraphics[trim={0.5cm 8.6cm 1.7cm 8cm},clip,width=0.9\columnwidth]{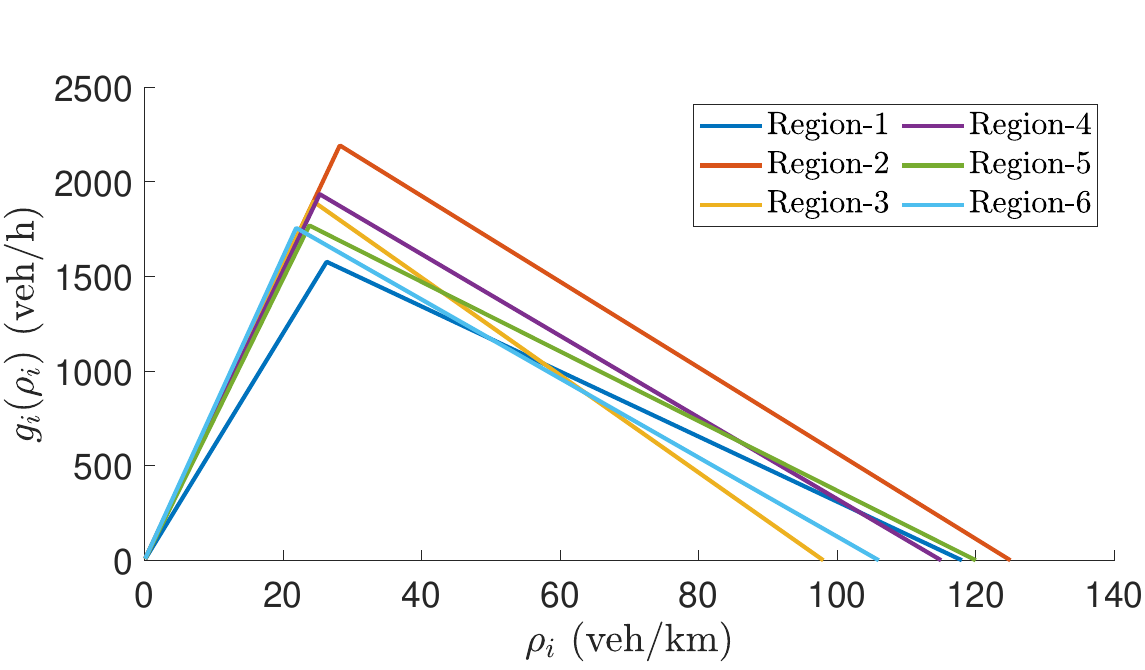}
\caption{Network MFDs produced via the parameters of \Cref{tab:params}.}
\label{fig:mfd-6b}
\end{subfigure}
\vspace{-2mm}
\caption{6-region network topology and MFD characteristics.}
\label{fig:mfd-6}
\vspace{-3mm}
\end{figure}

Two cases are tested employing the VAC schemes presented in \Cref{sec:con_ex}.
For Case-1, the VAC scheme described by \eqref{eq:con_prop} is employed in all the regions.
For Case-2, the VAC scheme described by \eqref{eq:con_prop} is employed in regions~1 and 2, the scheme described by \eqref{eq:con_prop_non} is employed in regions~3 and 4, the scheme described by \eqref{eq:con_1st_ord} is employed in region~5 and the scheme described by \eqref{eq:con_2nd_ord} is employed in region~6.
This aims to show that different VAC schemes can be employed in each region, without compromising the stability of the traffic network.
An integrator (see \Cref{sec:set_point_stabilization}) is included when the VAC scheme \eqref{eq:con_prop} is employed.
The goal during the simulations is to drive the system states to the values documented in \Cref{tab:con_set};
The selection of these set-points was based on prior knowledge and a performance criterion commanding maximum-throughput steady-state operation in free-flow conditions.
\begin{table}[t]
    \centering
    \caption{Feedback interconnection steady state set-points corresponding to the parameters of the VAC dynamics of \Cref{tab:con_par} }
    \label{tab:con_set}
    \begin{tabularx}{\columnwidth}{cX}
    \hline
        Simulation: & Set-point vectors:  \\
    \hline
 1,2 &       $\boldsymbol{\rho}^*{=}[17.4,
   22.9,
   24.4,
   18,
   12.5,
   21.9]^T$  \\ \hline
    \end{tabularx}
\end{table}
The parameter values for the employed VAC schemes are documented in \Cref{tab:con_par}.
\begin{table}[t]
    \centering
    \caption{Gain values }
    \label{tab:con_par}
    \begin{tabularx}{\columnwidth}{cX}
    \hline
        VAC scheme: & Gains: \\
    \hline 
        \eqref{eq:con_prop} &$\boldsymbol{\eta}{=}[63.3,
   65.1,
   83.9,
   91.5,
   73.3,
  111.4]^T$,
   $\mathbf{c}{=}[1280.5,
    2658.1,
    2677.1,
    1732.7,
    1004.0,
    2507.6]^T$, 
$\boldsymbol{\upsilon} {=} [1,1,0.001,0.001,0.002,0.001]^T$ \\
        \eqref{eq:con_prop_non} &$\eta_3=83.9,c_3=2678.5,\eta_4=91.5,c_4=1733.3,\phi_i(\rho_i)=0.001\rho_i^3, i=3,4$ \\
        \eqref{eq:con_1st_ord} &$\eta_5=48.9,\gamma_5=24.4,c_5=1004,\tau_5=0.1$ \\
        \eqref{eq:con_2nd_ord} & $\eta_6=110.4,\tau_6=0.02,\kappa_6=0.003$ \\\hline
    \end{tabularx}
\end{table}
It is noted that the gain values $\boldsymbol{\eta}$ were selected to satisfy condition \eqref{eq:eta} of \Cref{thm:1}.
The traffic network parameters are given in \Cref{tab:params}.

\begin{table}[t]
    \centering
    \caption{6-region network parameters\cite{MENELAOU2023}}
    \label{tab:params}
    \begin{tabularx}{\columnwidth}{XX}
    \hline
        Parameter \& Value \\
    \hline
    $f_i(\rho_i)=\min\{\psi^\text{f}_{i}\rho_i,\frac{\psi^\text{f}_{i}\rho_i^\text{C}}{\rho_i^\text{J}-\rho_i^\text{C}} (\rho_i^\text{J}-\rho_i)\}$ \\
     $\mathbf{L}{=}[1.2, 1, 0.85, 0.9, 1.02, 0.88]^T$  \\
     $\mathbf{l}{=}[0.6, 0.45, 0.35, 0.4, 0.48, 0.34]^T$ \\
    $\boldsymbol{\psi}^f{=}[30, 35, 32, 34, 35, 31]^T$ \\
    $\boldsymbol{\rho}^J{=}[118, 125, 98, 115, 120, 106]^T$ \\
    $\boldsymbol{\rho}^{C,f}{=}[26.3, 28.2, 24.4, 25.3, 23.8, 21.9]^T$ \\
   $\mathbf{w}{=}\begin{bmatrix}
   0.25 & 0.25 & 0 &    0 &    0.25 & 0.25\\
   0.15 & 0.35 & 0.3 &  0 &    0 &    0.2\\
   0 & 0.1 &  0.3 &  0.4 &  0 &    0.2\\
   0 & 0 &    0.24 & 0.16 & 0.3 &  0.3\\
   0.05 & 0.1 &    0 &    0.25 & 0.3 &  0.3\\
   0.32 & 0.03 & 0.23 & 0.17 & 0.1 &  0.15
    \end{bmatrix}$ \\
    $\xi_{ij}=\xi_{ji}=1$, $\forall i,j$\\
    $v^{L}_i=\max\{\psi^\text{f}_{i},\frac{\psi^\text{f}_{i}\rho_i^\text{C}}{\rho_i^\text{J}-\rho_i^\text{C}}\}$, $\forall i$\\
    $v^{d,L}_i=0.2 v^{L}_i$, $\forall i$\\
    \hline
    \end{tabularx}
\end{table}

Additionally two different forms of disturbances are employed during the simulations (applied separately). 
The first corresponds to traffic-flow signalling devices malfunction, while the second corresponds to constant driver non-adherence to the VAC commands (simulated by inflow to the network described by random noise with zero mean and standard deviation equal to 20\% of $u_i(t)$ at each control loop).
The duration of the first simulation scenario is $120\,\text{[min]}$ while for the second simulation scenario it is $60\,\text{[min]}$.


\subsection{Traffic-flow signalling devices malfunction\label{sec:num_sce_1}}
The VAC schemes ability to operate in the presence of disturbances in the form of traffic-flow signalling devices malfunction is showcased next.
At $t=30\,\text{[min]}$ and for $1.5\,\text{[min]}$, each VAC scheme is disengaged in all regions and furthermore unregulated vehicles enter the network from regions-1, 3 and 5 at capacity flow rates, thus driving some regions to congestion.
This event simulates commonly observed driver non-adherence behaviour during periods where traffic-flow signalling devices malfunction or stop working altogether and constitutes an instance of serious controller disruption/disturbance.
Pointedly, at the unregulated $1.5\,\text{[min]}$ interval the admitted demand profile $\mathbf{u}$ is given by
\begin{equation*}
\mathbf{u}(t){=}[938.9,0  ,929.2,  0,991.3,0]^T,30\leq t\leq31.5\,\text{[min]}.
\end{equation*}
When the interval passes, the VAC framework is re-engaged in order to recover the system at the desired set-points.

The results of the simulation are depicted in Figs.~\ref{fig:u}-\ref{fig:g}. 
The network state converges to the desired operating point in less than 10~[min], with only a minor overshoot observed in regions~3-6.
During the control disruption interval the system traffic density diverges from the desired operation point since vehicle inflow is unregulated.
After the controllers are re-engaged, they drive the density at every region to steady state as demonstrated in Fig.~\ref{fig:r}, obtaining the desired set-points.
Despite the control disruption (see Fig.~\ref{fig:u}, t=30~[min]) and the unregulated vehicles entering the network, and also the fact that some regions get congested 
(see Fig.~\ref{fig:v}), the employed VAC framework is able to recover the system to the desired density set-points (see Fig.~\ref{fig:r}).

More specifically, regions~3, and 6 get congested;
an increase in vehicle density and a drop in vehicle speed is observed in Figs.~\ref{fig:r}, \ref{fig:v}.
At the same time a density drop is observed due to the control disruption, see Fig.~\ref{fig:u} (t=30~[min]).
Consequently, the inter-regional outflow response is heavily affected by the disruption in control regulation, with outflow variations in almost all regions as depicted in Fig.~\ref{fig:g}.
It is signified however that a swift recovery to free-flow conditions is demonstrated, see Fig.~\ref{fig:g} (t$>$40~[min]), and gridlock is avoided;
these results demonstrate the effectiveness of the approach.

\begin{figure}[t]
\centering
\begin{subfigure}{1\columnwidth}
\centering
\includegraphics[trim={1.0cm 8.5cm 1.7cm 8cm},clip,width=1\columnwidth]{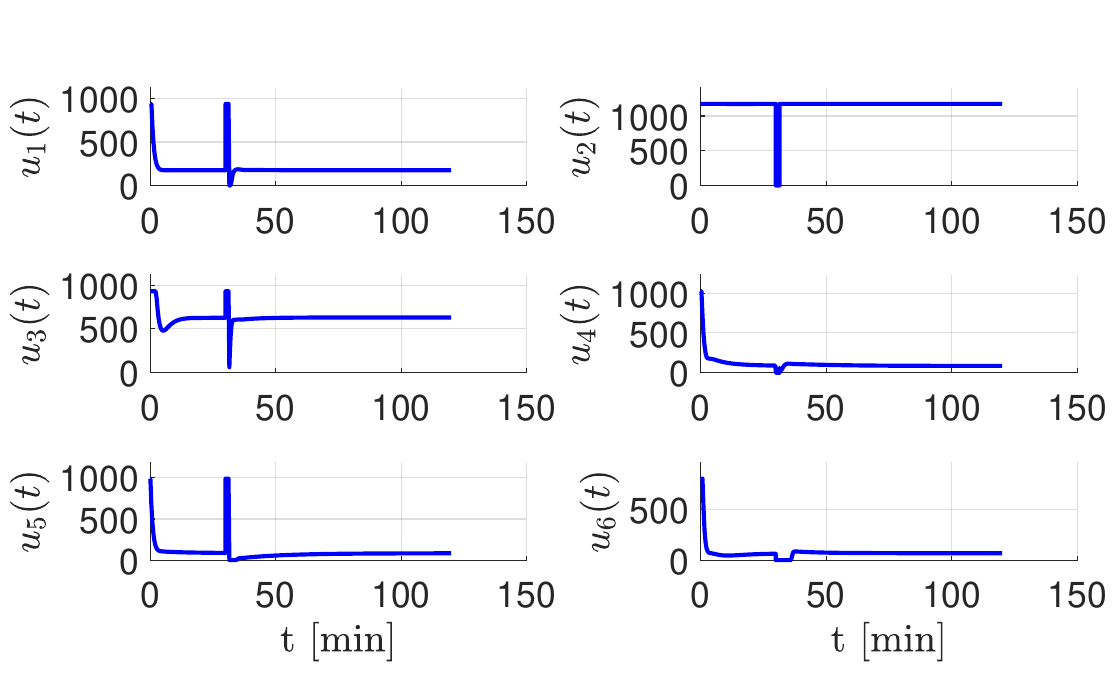}
\caption{Case-1 employing a single VAC scheme for network regulation.}
\label{fig:ua}
\end{subfigure}\\
\vspace{-2mm}%
\begin{subfigure}{1\columnwidth}
\centering
\includegraphics[trim={1.0cm 8.5cm 1.7cm 8cm},clip,width=1\columnwidth]{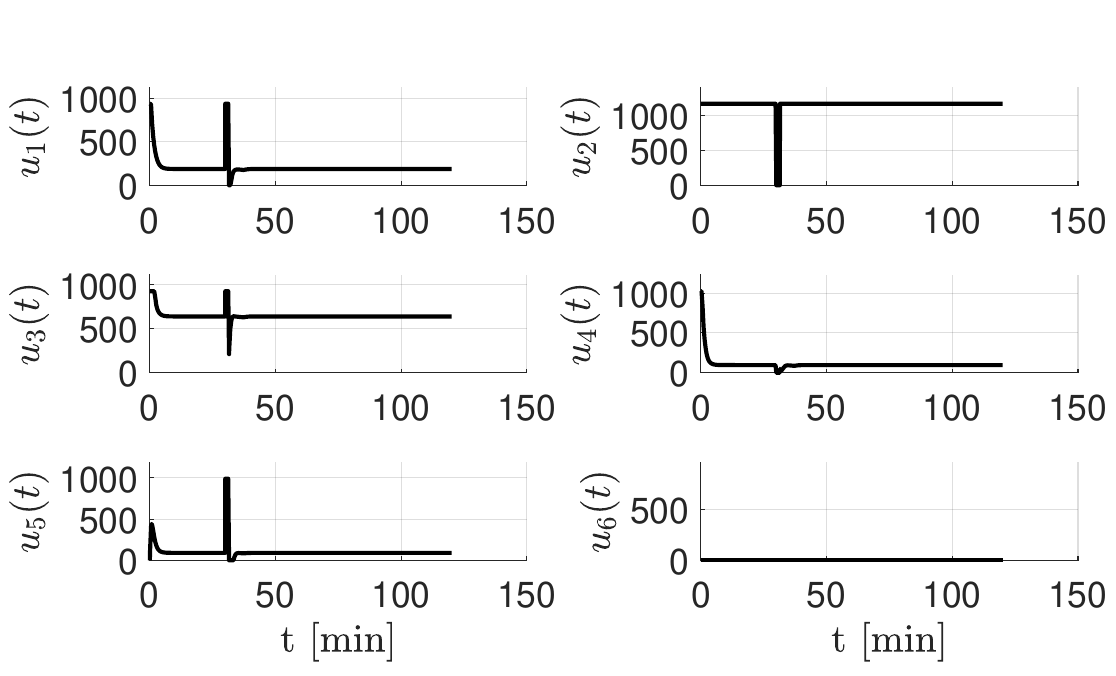}
\caption{Case-2 simultaneously employing 4 different VAC schemes for network regulation.}
\label{fig:ub}
\end{subfigure}
\vspace{-2mm}
\caption{Admitted demand $[{\text{veh}}/{\text{h}}]$. Case-1: blue line. Case-2: black dotted line.}
\label{fig:u}
\vspace{-3mm}
\end{figure}

\begin{figure}[t]
         \centering
         \includegraphics[trim={1.0cm 8.5cm 1.7cm 8cm},clip,width=0.9\columnwidth]{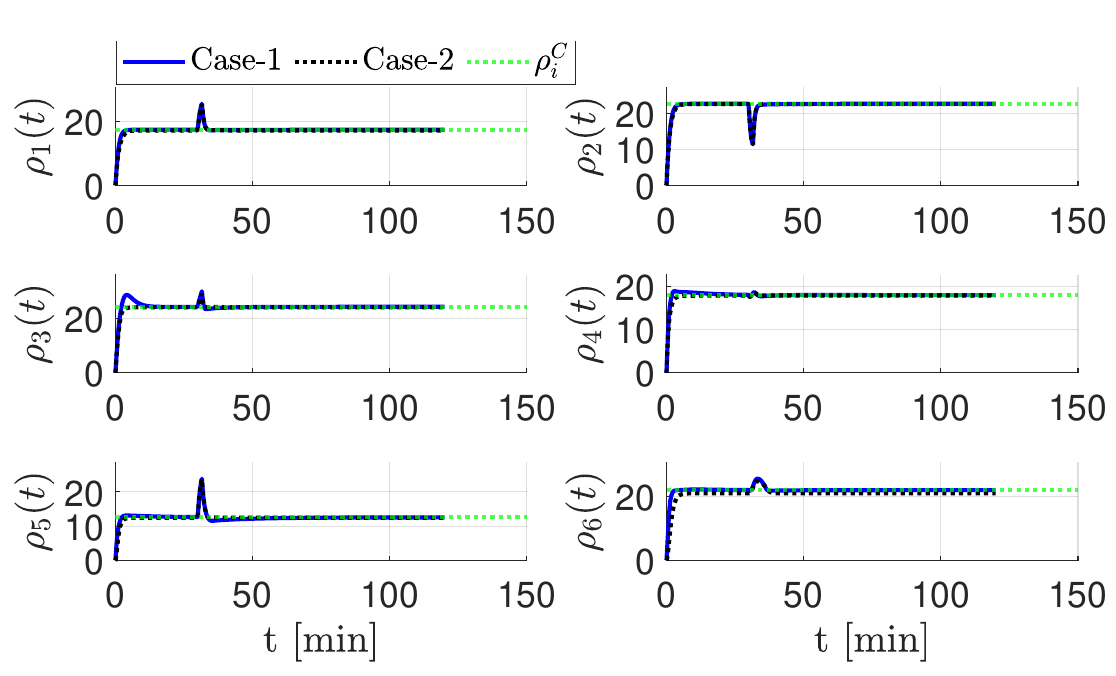}
                \vspace{-2mm}
         \caption{Network density $[{\text{veh}}/{\text{km}}]$. The VAC schemes drive the solutions at the reference set-points even from the congested region and despite control authority disruptions. Case-1: blue line. Case-2: black dotted line. }
         \label{fig:r}
                  \vspace{-3mm}
\end{figure}

\begin{figure}[t]
\centering
\includegraphics[trim={1.0cm 8.5cm 1.7cm 8cm},clip,width=0.9\columnwidth]{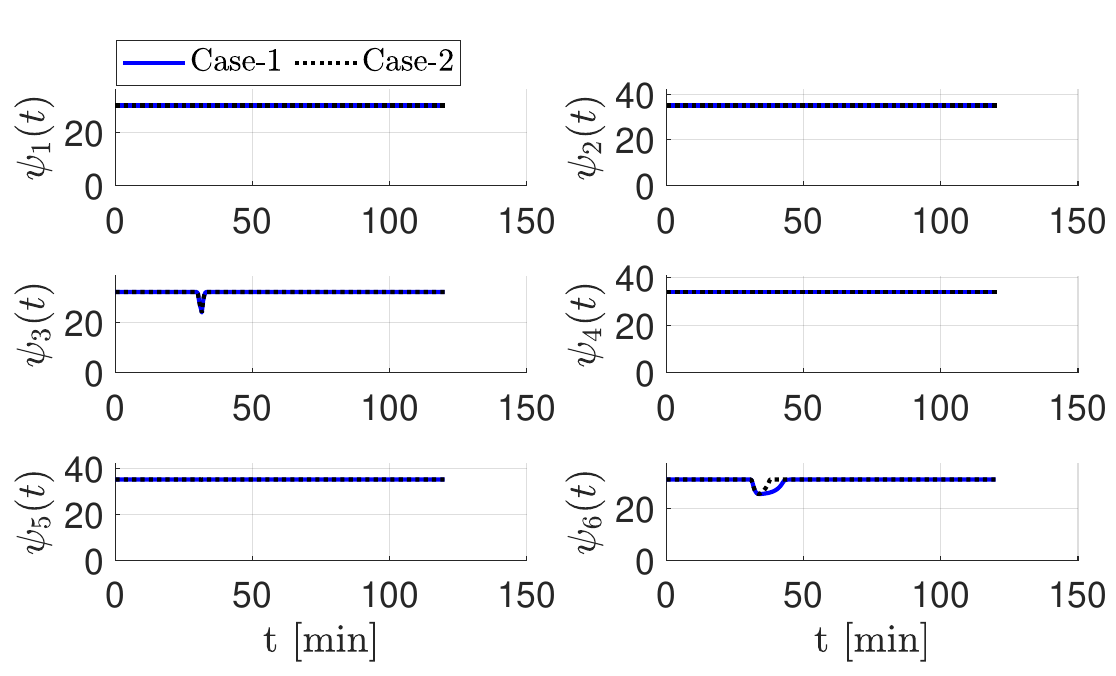}
\vspace{-2mm}
\caption{Network speed $[{\text{km}}/{\text{h}}]$. Initially, vehicles travel at free-flow speed. 
Due to control authority disruptions (see $t=30\,\text{[min]}$), for a short interval, the flow speed for regions~3, and 6 reduces below the free-flow nominal values meaning that those regions become congested. Case-1: blue line. Case-2: black dotted line.
}
\label{fig:v}
\vspace{-3mm}
\end{figure}

\begin{figure}[t]
         \centering
         \includegraphics[trim={1.0cm 8.5cm 1.7cm 8cm},clip,width=0.9\columnwidth]{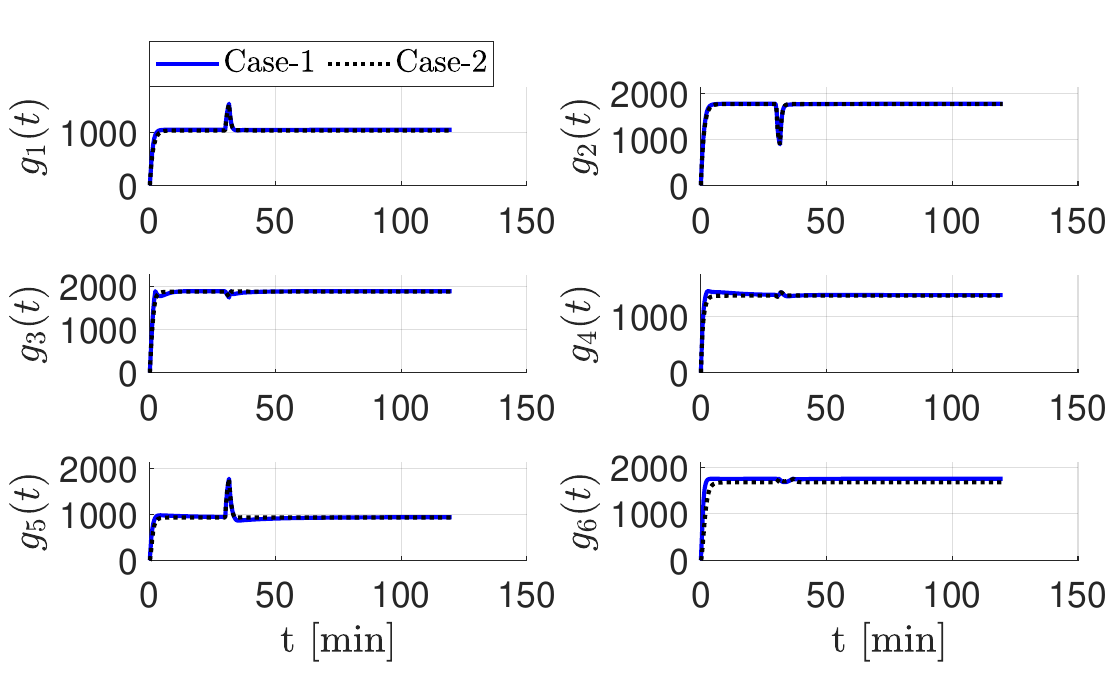}
                \vspace{-2mm}
         \caption{Inter-regional outflow $[{\text{veh}}/{\text{h}}]$. 
         Despite control authority disruptions, the outflow assumes steady-state due to the action of the VAC schemes driving the states at the desired set-points even from congestion (see $t=30\,\text{[min]}$). Case-1: blue line. Case-2: black dotted line. }
         \label{fig:g}
\end{figure}


\subsection{Continuous driver non-adherence to VAC commands}

To further test the framework's ability to operate in the presence of disturbances, the simulations are repeated, this time implementing constant driver non-adherence to VAC commands instead of traffic-flow signalling device malfunctions.
Constant driver non-adherence to VAC commands is simulated by the introduction of a disturbance inflow to the network described by uniformly distributed random noise with zero mean, $\mu=0$, and standard deviation equal to 20$\%$ of the control effort $u_i(t)$, i.e. $\sigma=0.2u_i(t),\forall i\in\mathcal{N}$.
This, simulates commonly observed delays or early departure responses of drivers to VAC commands.

The results of the simulation can be seen in Figs.~\ref{fig:ud}-\ref{fig:gd}. 
Despite constant driver non-adherence to VAC commands, the VAC framework drives the density at each region to steady state approximately in 15~[min], as demonstrated in Fig.~\ref{fig:rd}, obtaining the desired set-points.
After convergence, all the regions operate in free-flow (see Fig.~\ref{fig:vd}, t=15~[min]), and both {employed} VAC strategies efficiently regulate the system to the desired density set-points (see Fig.~\ref{fig:rd}, t$>$15~[min]).

The inter-regional outflow response has a noisy profile due to the VAC strategies reacting to  constant driver non-adherence, see Fig.~\ref{fig:gd}.
Despite this, a swift convergence to the desired operation points is observed and steady state robust response is demonstrated, see Fig.~\ref{fig:gd} (t$>$15~[min]). 

\begin{figure}[t]
         \centering
         \includegraphics[trim={1.0cm 8.5cm 1.7cm 8cm},clip,width=0.9\columnwidth]{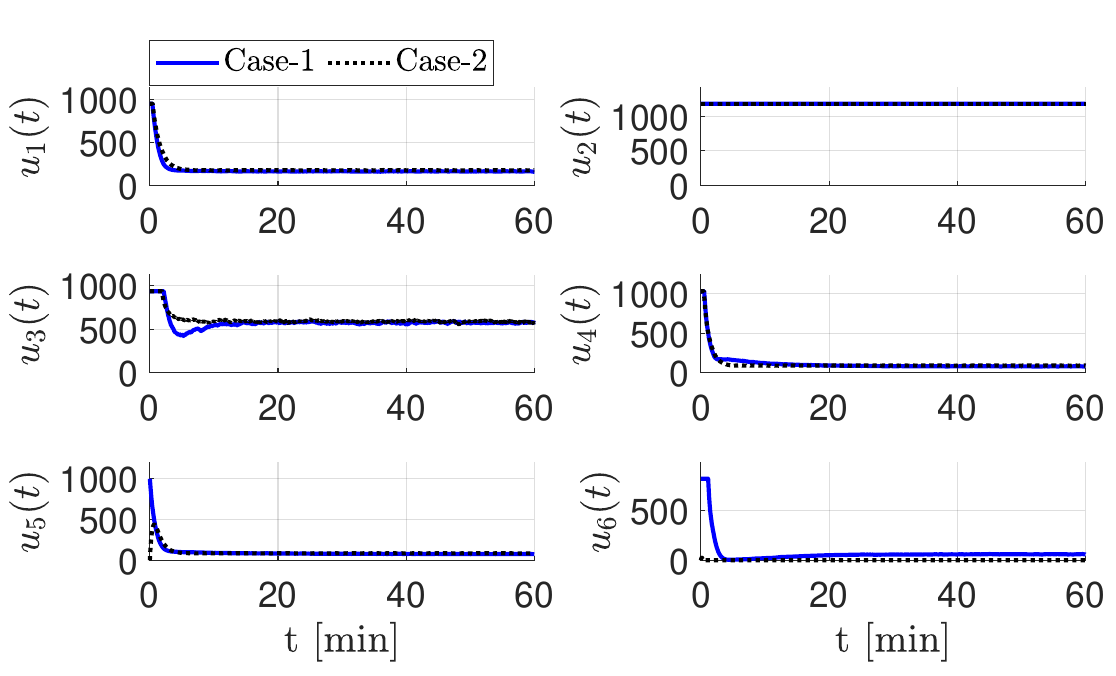}
                \vspace{-2mm}
         \caption{Admitted demand $[{\text{veh}}/{\text{h}}]$. Case-1: blue line. Case-2: black dotted line. 
         }
         \label{fig:ud}
                  \vspace{-3mm}
\end{figure}

\begin{figure}[t]
         \centering
         \includegraphics[trim={1.0cm 8.5cm 1.7cm 8cm},clip,width=0.9\columnwidth]{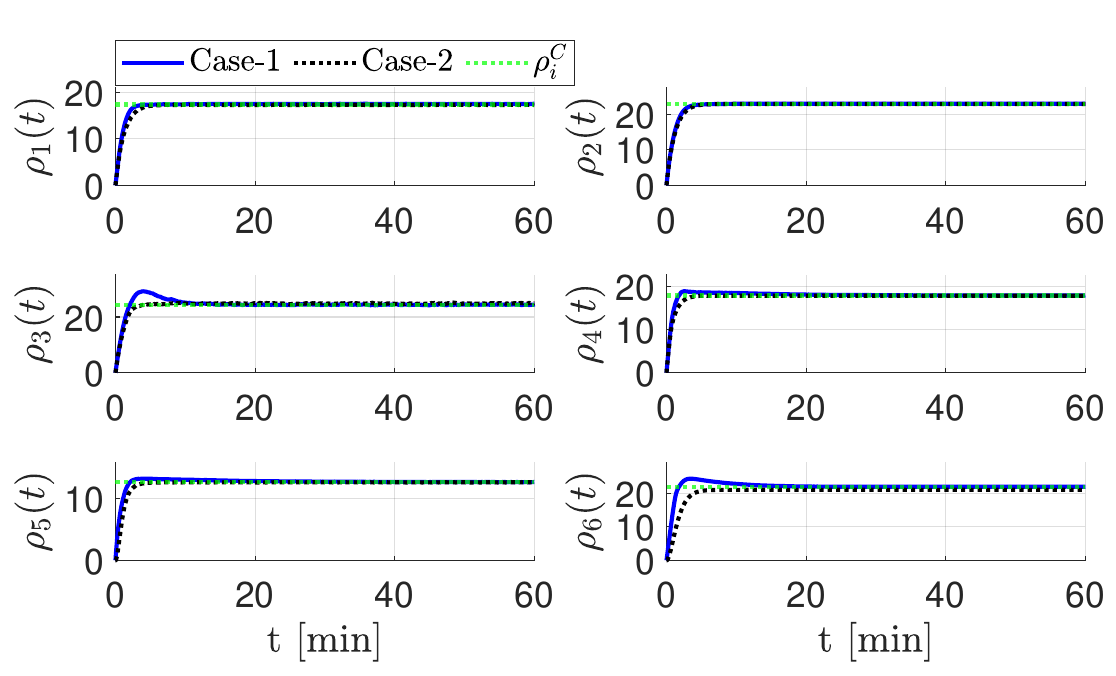}
                \vspace{-2mm}
         \caption{Network density $[{\text{veh}}/{\text{km}}]$. The VAC schemes quickly achieve the desired operation set-points despite constant user non-adherence. Case-1: blue line. Case-2: black dotted line. }
         \label{fig:rd}
                  \vspace{-3mm}
\end{figure}

\begin{figure}[t]
         \centering
         \includegraphics[trim={1.0cm 8.5cm 1.7cm 8cm},clip,width=0.9\columnwidth]{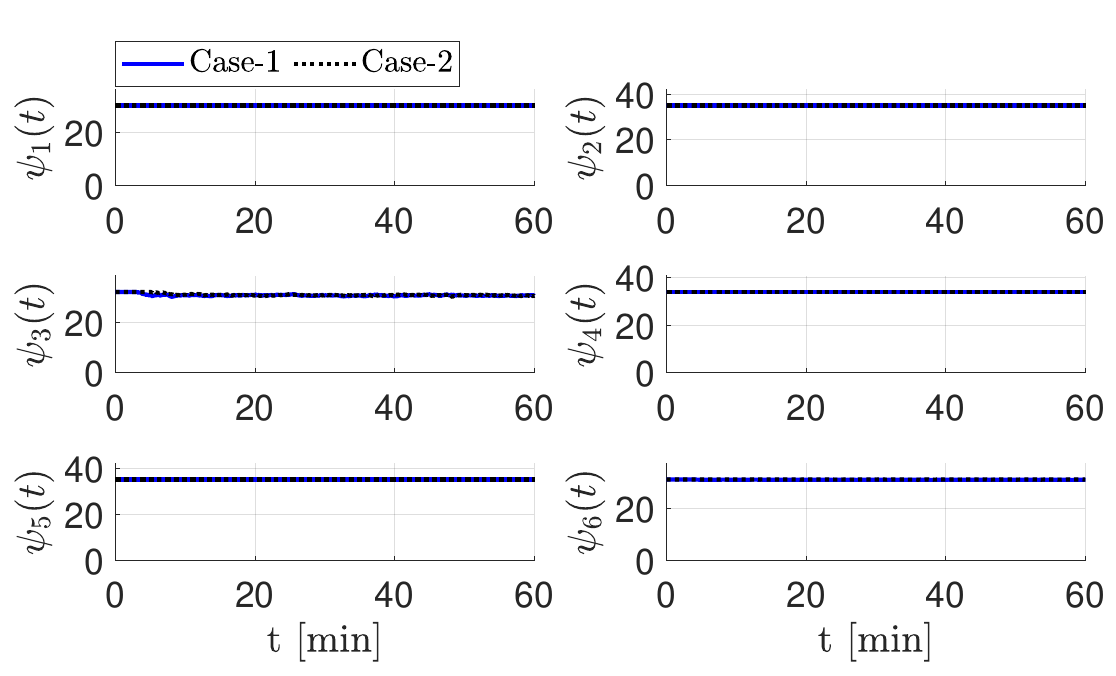}
                \vspace{-2mm}
         \caption{Network speed $[{\text{km}}/{\text{h}}]$.  Vehicles travel at free-flow speed hence congestion doesn't manifest the network. Case-1: blue line. Case-2: black dotted line. 
         }
         \label{fig:vd}
                  \vspace{-3mm}
\end{figure}

\begin{figure}[t]
         \centering
         \includegraphics[trim={1.0cm 8.5cm 1.7cm 8cm},clip,width=0.9\columnwidth]{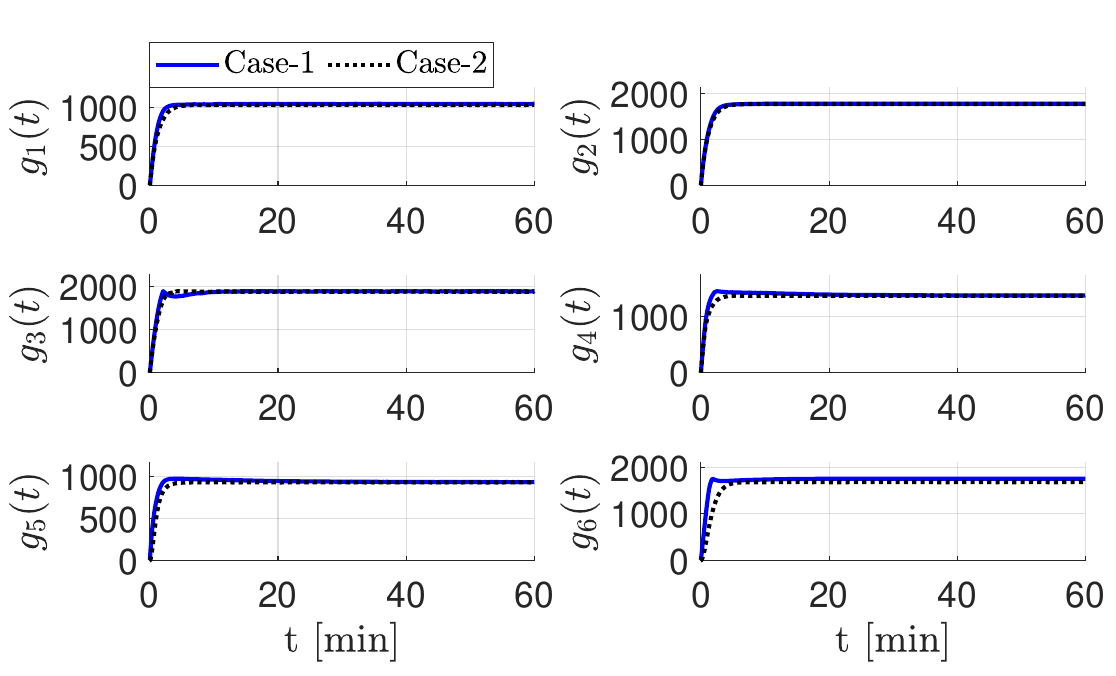}
                \vspace{-2mm}
         \caption{Inter-regional outflow $[{\text{veh}}/{\text{h}}]$. 
         The inter-regional outflow quickly assumes steady-state due to the action of the VAC schemes driving the states at the desired set-points despite constant user non-adherence. Case-1: blue line. Case-2: black dotted line. }
         \label{fig:gd}
\end{figure}


\subsection{Demonstrating Scalability\label{sec:scal}}
The following numerical simulation aims to demonstrate the scalability and large scale applicability of the developed approach.
Specifically, to demonstrate that the presented results are also applicable to larger networks, the simulation from \Cref{sec:num_sce_1} was repeated on the 20-region network shown in Fig.~\ref{fig:20n}.
In this simulation, \eqref{eq:con_prop}, \eqref{eq:con_prop_non}, \eqref{eq:con_1st_ord}, \eqref{eq:con_2nd_ord}, and \eqref{sys_u} are employed for regions~1 to 4, 5 to 8, 9 to 12, 13 to 16 and 17 to 20, respectively.

\begin{figure}[t]
\centering
\begin{subfigure}{1\columnwidth}
\centering
\includegraphics[trim={7.3cm 10.4cm 6.4cm 10cm},clip,width=0.5\columnwidth]{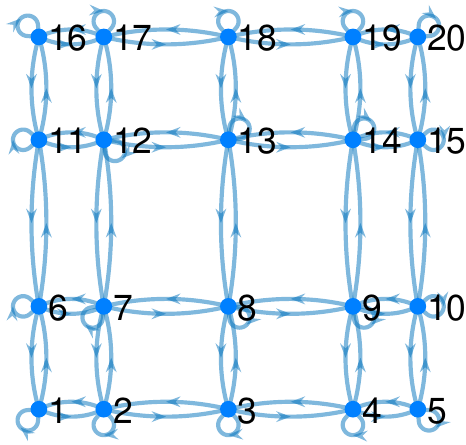}
\caption{Node connectivity via directed edges for the 20-region network employed in \Cref{sec:scal}.}\label{fig:20n}
\end{subfigure}\\
\vspace{-2mm}%
\begin{subfigure}{1\columnwidth}
\centering
\includegraphics[trim={0.0cm 8.5cm 1.7cm 8cm},clip,width=0.9\columnwidth]{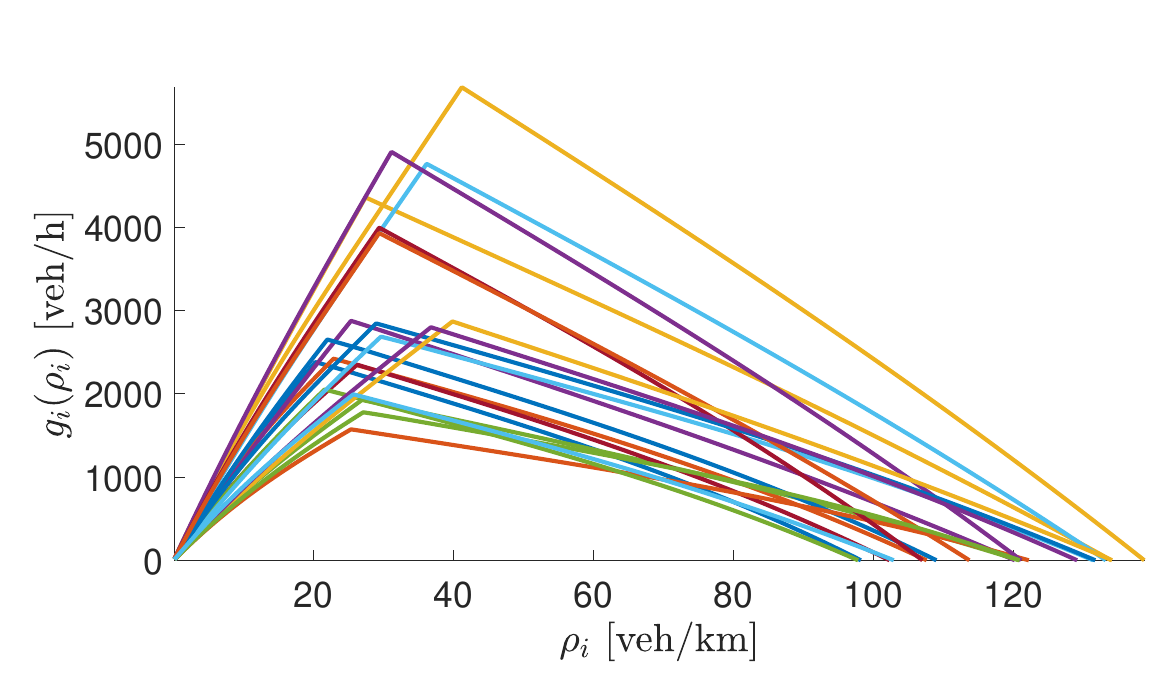}
\caption{Network MFDs produced via the parameters of \Cref{tab:params_20}.}
\label{fig:mfd-20b}
\end{subfigure}
\vspace{-2mm}
\caption{20-region network topology and MFD characteristics.}
\label{fig:mfd-20}
\vspace{-3mm}
\end{figure}

The goal of the simulation is to drive the system states to the values documented in \Cref{tab:con_set_20};
The selection of these set-points was based on prior knowledge and a performance criterion commanding maximum-throughput steady-state operation in free-flow conditions.
The traffic network parameters are given in \Cref{tab:params_20}.
The parameter values for the employed VAC schemes are documented in \Cref{tab:con_par_20}.
It is noted that the gain values $\boldsymbol{\eta}$ were selected to satisfy condition \eqref{eq:eta} of \Cref{thm:1}.
For compactness and to improve readability, the worst density deviation from $\boldsymbol{\rho}^*$ across all 20 regions is used for presentation and is given in Fig.~\ref{fig:r_max_v_7}. Figure~\ref{fig:r_max_v_7} also includes the vehicle speed for Region~7, presented as a representative plot to illustrate how congestion can occur, as well as the framework's ability to recover from it.
To explicitly provide the density-error response, and the velocity response for all the regions of the network, the logarithmic density-error heatmap transient response is given in Fig.~\ref{fig:e_r_20_heat} while heatmaps of the vehicle speeds for all regions are given in Fig.~\ref{fig:v_20_heat}.
The results are given next:
\begin{table*}[t]
\centering
\caption{Feedback interconnection steady state set-points corresponding to the parameters of the VAC dynamics of \Cref{tab:con_par_20} }
\label{tab:con_set_20}
\centering
\begin{tabularx}{\textwidth}{cX}
\hline
Region densities: & Set-point vectors:  \\
\hline
$\boldsymbol{\rho}^*$ &$[20.2,14.4,6.3,8.7,9.6,15.9,26.2,11.9,25.3,11.3,9.9,21.5,16.2,9.9,28.9,6.3,19,20.1,27.1,25.7]^T$  \\ \hline
\end{tabularx}
\end{table*}
\begin{table*}[t]
\centering
\caption{Feedback interconnection steady state set-points corresponding to the parameters of the VAC dynamics of \Cref{tab:con_par_20_uncert} }
\label{tab:con_set_20_uncert}
\centering
\begin{tabularx}{\textwidth}{cX}
\hline
Region densities: & Set-point vectors:  \\
\hline
$\boldsymbol{\rho}^*$ &$[16.08,11.36,4.96,6.8,7.36,12.8,20.96,9.44,20.24,9.04,8.24,17.28,12.8,7.92,23.12,5.12,15.36,15.84,21.68,20.56]^T$  \\ \hline
\end{tabularx}
\end{table*}
\begin{table*}[t]
    \centering
    \caption{20-region network parameters\cite{MENELAOU2023}}
    \label{tab:params_20}
    \begin{tabularx}{\textwidth}{XX}
    \hline
        Parameter \& Value \\
    \hline
    $f_i(\rho_i)=\min\{v^\text{f}_{i}\rho_i,\frac{v^\text{f}_{i}\rho_i^\text{C}}{\rho_i^\text{J}-\rho_i^\text{C}} (\rho_i^\text{J}-\rho_i)\}$ \\
     $\mathbf{L}{=}[1.11,0.95,0.92,1.16,0.98,1.19,1.07,1.06,0.86,1.18,0.99,1.04,0.98,0.91,1.16,1.04,0.82,1.05,0.94,1.02]^T$  \\
     $\mathbf{l}{=}[0.49,0.39,0.27,0.44,0.48,0.43,0.58,0.46,0.57,0.3,0.3,0.53,0.39,0.31,0.57,0.37,0.51,0.5,0.56,0.47]^T$ \\
    $\mathbf{v}^f{=}[43.29,35.36,43.63,39.35,30.24,43.94,40.36,44.96,32.59,32.06,43.99,40.45,30.99,41.33,41.31,43.85,40.67,31.86,30.3,30.39]^T$ \\
    $\boldsymbol{\rho}^J{=}[98.2,107.6,134,120.2,120.8,133.2,102.3,109,122.2,138.7,121.1,97.8,131.4,107,131.7,113.7,134.1,129.1,120.9,102.9]^T$ \\
    $\boldsymbol{\rho}^{C,f}{=}[20.23,22.83,27.4,25.33,26.89,36.14,26.19,21.94,25.32,41.16,31.1,21.55,29.6,29.36,28.91,29.35,39.83,36.75,27.08,25.66]^T$ \\
\setlength{\arraycolsep}{2pt}
\renewcommand{\arraystretch}{0.8}
$\scriptstyle\mathbf{w}{=}\scriptstyle\begin{bmatrix}
    2.543&2.462&0&0&0&4.996&0&0&0&0&0&0&0&0&0&0&0&0&0&0\\
    1.500&1.917&1.229&0&0&0&5.353&0&0&0&0&0&0&0&0&0&0&0&0&0\\
   0&1.731&1.625&1.571&0&0&0&5.073&0&0&0&0&0&0&0&0&0&0&0&0\\
   0&0&2.492&2.138&1.368&0&0&0&4.002&0&0&0&0&0&0&0&0&0&0&0\\
   0&0&0&3.213&1.365&0&0&0&0&5.422&0&0&0&0&0&0&0&0&0&0\\
   3.695&0&0&0&0&0.989&1.688&0&0&0&3.627&0&0&0&0&0&0&0&0&0\\
   0&2.543&0&0&0&2.160&0.980&0.861&0&0&0&3.455&0&0&0&0&0&0&0&0\\
   0&0&3.502&0&0&0&2.548&0.902&0.985&0&0&0&2.063&0&0&0&0&0&0&0\\
   0&0&0&3.691&0&0&0&2.357&0.677&0.523&0&0&0&2.752&0&0&0&0&0&0\\
   0&0&0&0&1.925&0&0&0&2.984&0.889&0&0&0&0&4.202&0&0&0&0&0\\
   0&0&0&0&0&2.553&0&0&0&0&2.900&1.288&0&0&0&3.259&0&0&0&0\\
   0&0&0&0&0&0&3.230&0&0&0&1.550&1.679&1.270&0&0&0&2.271&0&0&0\\
   0&0&0&0&0&0&0&1.546&0&0&0&2.854&2.225&1.138&0&0&0&2.238&0&0\\
   0&0&0&0&0&0&0&0&2.201&0&0&0&2.709&0.895&1.099&0&0&0&3.096&0\\
   0&0&0&0&0&0&0&0&0&3.771&0&0&0&1.391&1.074&0&0&0&0&3.763\\
   0&0&0&0&0&0&0&0&0&0&1.825&0&0&0&0&1.266&6.909&0&0&0\\
   0&0&0&0&0&0&0&0&0&0&0&3.197&0&0&0&1.380&1.173&4.249&0&0\\
   0&0&0&0&0&0&0&0&0&0&0&0&2.729&0&0&0&1.188&1.786&4.298&0\\
   0&0&0&0&0&0&0&0&0&0&0&0&0&1.867&0&0&0&2.418&1.857&3.858\\
   0&0&0&0&0&0&0&0&0&0&0&0&0&0&1.756&0&0&0&2.090&6.154
    \end{bmatrix}\cdot10^{-1}$\\
    $\xi_{ij}=\xi_{ji}=1$, $\forall i,j$\\
    $v^{L}_i=\max\{v^\text{f}_{i},\frac{v^\text{f}_{i}\rho_i^\text{C}}{\rho_i^\text{J}-\rho_i^\text{C}}\}$, $\forall i$\\
    $\boldsymbol{v}^{d,L}_{\min}=[25.4,23.2,18.6,20.8,20.7,18.8,24.4,22.9,20.4,18,20.6,25.6,19,23.4,19,22,18.6,19.4,20.7,24.3]^T$\\
    $\boldsymbol{v}^{d,L}_{\max}=[76.4,69.7,56,62.4,62.1,56.3,73.3,68.8,61.4,54.1,61.9,76.7,57.1,70.1,56.9,66,55.9,58.1,62,72.9]^T$\\
    \hline
    \end{tabularx}
\end{table*}

\begin{table}[t]
\centering
\caption{Gain values employed in \Cref{sec:scal}}
\label{tab:con_par_20}
\begin{tabularx}{\columnwidth}{cX}
\hline
VAC scheme: & Gains: \\
\hline 
\eqref{eq:con_prop} &$\boldsymbol{\eta}_{1,..,4}=[544.6,454.9,488,428.1]^T$\\
	& $\mathbf{c}_{1,..,4}=[11775,6411,3086,3660]^T$\\
\eqref{eq:con_prop_non} &$\boldsymbol{\eta}_{5,..,8}=[330.8,533.7,613.7,638.7]^T$\\
	&$\mathbf{c}_{5,..,8}=[3153,8530,15888,7488]^T$\\
	&$\phi_i(\rho_i)=0.001\rho_i^3, i=5,..,8$\\
\eqref{eq:con_1st_ord} &$\boldsymbol{\eta}_{9,..,12}=[234.3,426.2,268.1,224.4]^T$\\
	&$\boldsymbol{\gamma}_{9,..,12}=[117.2,213.1,134.1,112.2]$\\
	&$\mathbf{c}_{9,..,12}=[8794.8,7211.8,4432,7213.1]^T$\\
	&$\tau_i=0.1, i=9,..,12$\\
\eqref{eq:con_2nd_ord} &$\boldsymbol{\eta}_{13,..,16}=[293.3,485,270.5,612.2]^T$\\
	&$\mathbf{c}_{13,..,16}=[4702,4834.1,9062.6,3972]^T$\\
	&$\tau_i=0.1, \kappa_i=0.2, i=13,..,16$\\
\eqref{sys_u} &$\boldsymbol{\rho}^{th,1}_{17,..,20}=[0.4,0.4,0.3,0.3]^T$\\
	&$\boldsymbol{\rho}^{th,2}_{17,..,20}=[131.4,126.5,118.5,100.8]^T$\\
	&$\mathbf{p}^{c,max}_{17,..,20}=[134.1,129.1,120.9,102.9]^T$\\
	&$\boldsymbol{\beta}^c_{17,..,20}=[485.3,234.6,270.6,285.2]^T$\\
	&$\mathbf{c}_{17,..,20}=[9280,4716.1,7259.5,7419.1]^T$\\
	&$T_{1,i}=0.01,T_{2,i}=0.1,T_{3,i}=0.2, \kappa_i=0.1,\gamma_i=1.023, i=17,..,20$\\
	& $\boldsymbol{\upsilon}_{17,..,20}=[0.001,0.001,0.001,0.001]^T$\\\hline
\end{tabularx}
\end{table}

\begin{table}[t]
\centering
\caption{Gain values employed in \Cref{sec:uncert}}
\label{tab:con_par_20_uncert}
\begin{tabularx}{\columnwidth}{cX}
\hline
VAC scheme: & Gains: \\
\hline 
\eqref{eq:con_prop} &$\boldsymbol{\eta}_{1,..,4}=[356.8,532.1,457.3,532.6]^T$\\
	& $\mathbf{c}_{1,..,4}=[6591.9,6111.0,2367.2,3697.7]^T$\\
\eqref{eq:con_prop_non} &$\boldsymbol{\eta}_{5,..,8}=[253.6,541.4,378.8,472.7]^T$\\
	&$\mathbf{c}_{5,..,8}=[2005.4,7059.5,8035.0,4544.2]^T$\\
	&$\phi_i(\rho_i)=0.001\rho_i^3, i=5,..,8$\\
\eqref{eq:con_1st_ord} &$\boldsymbol{\eta}_{9,..,12}=[231.2,225.5,257.6,349.3]^T$\\
	&$\boldsymbol{\gamma}_{9,..,12}=[115.6,112.8,128.8,174.7]$\\
	&$\mathbf{c}_{9,..,12}=[7084.4,3166.8,3488.1,9127.9]^T$\\
	&$\tau_i=0.1, i=9,..,12$\\
\eqref{eq:con_2nd_ord} &$\boldsymbol{\eta}_{13,..,16}=[365,581.1,442.2,434.6]^T$\\
	&$\mathbf{c}_{13,..,16}=[4756,4708,11367,2333]^T$\\
	&$\tau_i=0.1, \kappa_i=0.2, i=13,..,16$\\
\eqref{sys_u} &$\boldsymbol{\rho}^{th,1}_{17,..,20}=[0.4,0.4,0.3,0.3]^T$\\
	&$\boldsymbol{\rho}^{th,2}_{17,..,20}=[131.4,126.5,118.5,100.8]^T$\\
	&$\mathbf{p}^{c,max}_{17,..,20}=[134.1,129.1,120.9,102.9]^T$\\
	&$\boldsymbol{\beta}^c_{17,..,20}=[397,450.8,448.2,317.9]^T$\\
	&$\mathbf{c}_{17,..,20}=[6217.5,7272.2,9773.8,6727.6]^T$\\
	&$T_{1,i}=0.01,T_{2,i}=0.1,T_{3,i}=0.2, \kappa_i=0.1,\gamma_i=1.023, i=17,..,20$\\
	& $\boldsymbol{\upsilon}_{17,..,20}=[0.001,0.001,0.001,0.001]^T$\\\hline
\end{tabularx}
\end{table}


Similar to \Cref{sec:num_sce_1}, network states (density, inter-regional flow, velocity) converge to the target operating point in under 10~[min], as shown by the worst density-error transient response (blue line) in Fig.~\ref{fig:r_max_v_7}.
Pointedly, the results show rapid density-error convergence to zero implying the system reaches the desired set point (see Fig.~\ref{fig:e_r_20_heat}). 
Vehicle speeds in all regions stabilize at nominal free-flow values (see~Fig. \ref{fig:v_20_heat});
indicatively, region~7 speed is shown in Fig.~\ref{fig:r_max_v_7} (red line).
At t=30~[min], a control disruption (unregulated vehicle inflow) causes the system traffic density to diverge, leading to congestion in regions 7, 9, 12 and 15 (see Fig.~\ref{fig:r_max_v_7} (red line) and Fig.~\ref{fig:v_20_heat})
However, as shown in the aforementioned figures, re-engagement of the controllers restores the network to the desired steady-state density points across all regions at approximately $t=$40~[min], restoring the nominal free-flow speed operation in the entire network, see Figs.~\ref{fig:r_max_v_7} and \ref{fig:v_20_heat}.
The seamless application of the VAC framework on a 20-region network and the swift convergence of the states and their fast recovery demonstrate the scalability and effectiveness of the developed framework.

\begin{figure}[t]
         \centering
         \includegraphics[trim={0.4cm 8.5cm 0.3cm 9cm},clip,width=0.8\columnwidth]{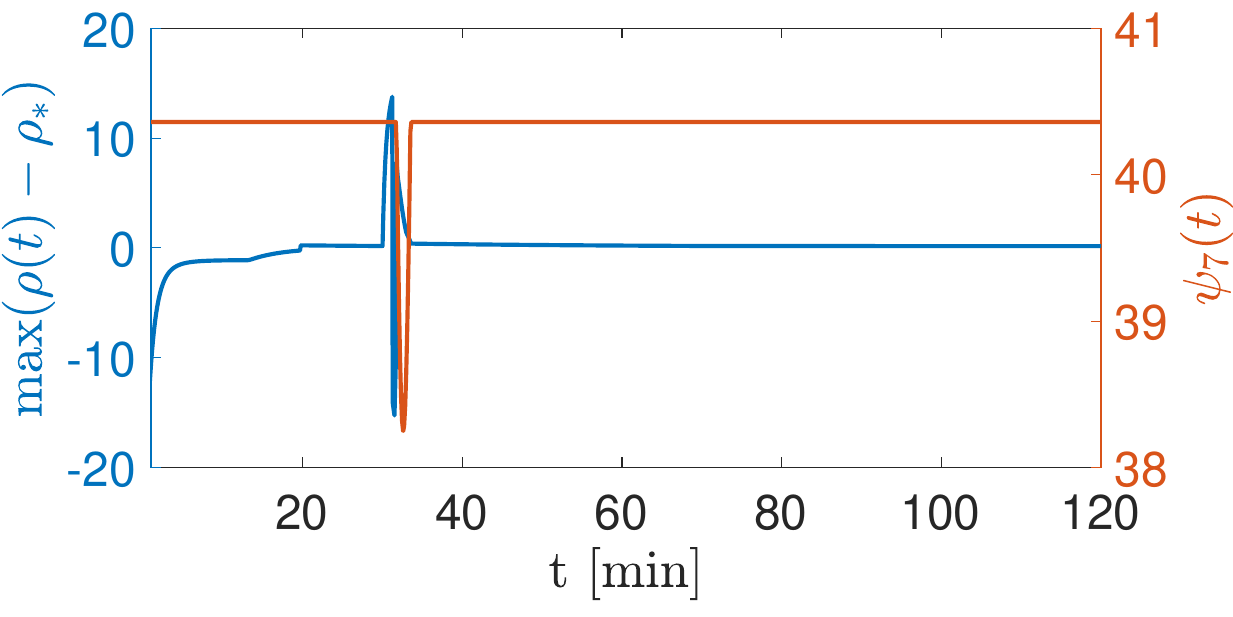}
                \vspace{-2mm}
         \caption{Blue line: Worst density deviation from $\boldsymbol{\rho}^*$ across 20 regions, $[{\text{veh}}/{\text{km}}]$. 
         Red line: Region~7 speed indicating that the region gets momentarily congested at t = 30~[min], $[{\text{km}}/{\text{h}}]$.
         }
         \label{fig:r_max_v_7}
                  \vspace{-3mm}
\end{figure}

\begin{figure}[t]
\centering
\begin{subfigure}{1\columnwidth}
\centering
\includegraphics[trim={0cm 8.5cm 1.2cm 9cm},clip,width=0.8\columnwidth]{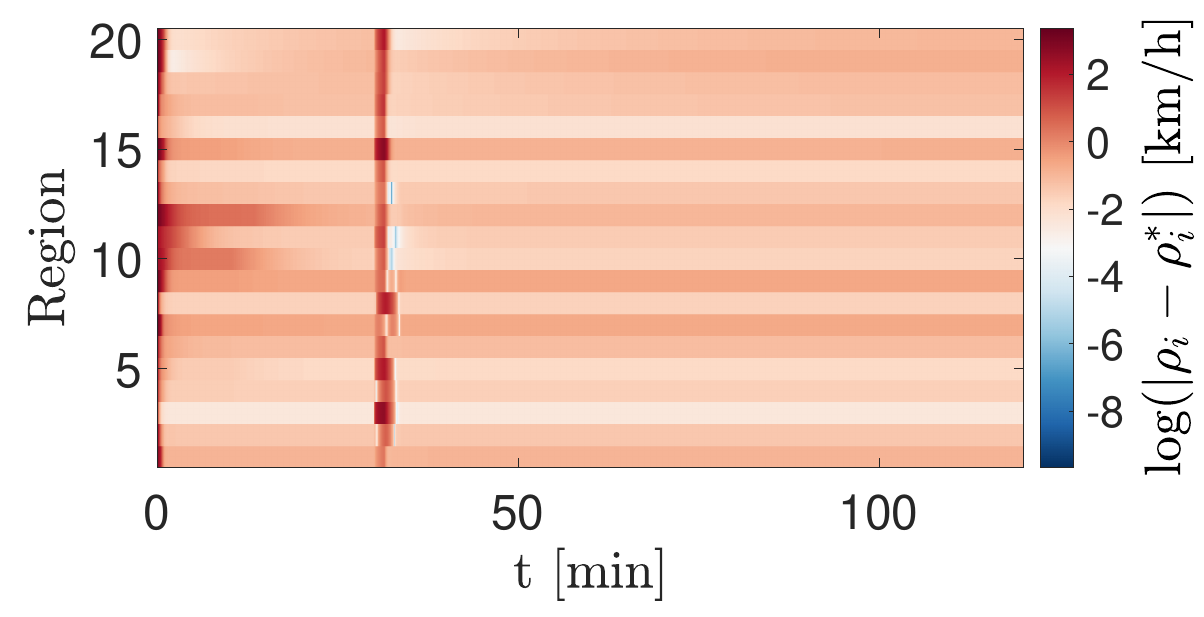}
\caption{Logarithmic heatmap density-error response from desired \Cref{tab:con_set_20} values, $[{\text{veh}}/{\text{km}}]$, showing rapid convergence to zero.}
\label{fig:e_r_20_heat}
\end{subfigure}\\
\vspace{-0.2mm}%
\begin{subfigure}{1\columnwidth}
\centering
\includegraphics[trim={0cm 8.5cm 1.2cm 9cm},clip,width=0.8\columnwidth]{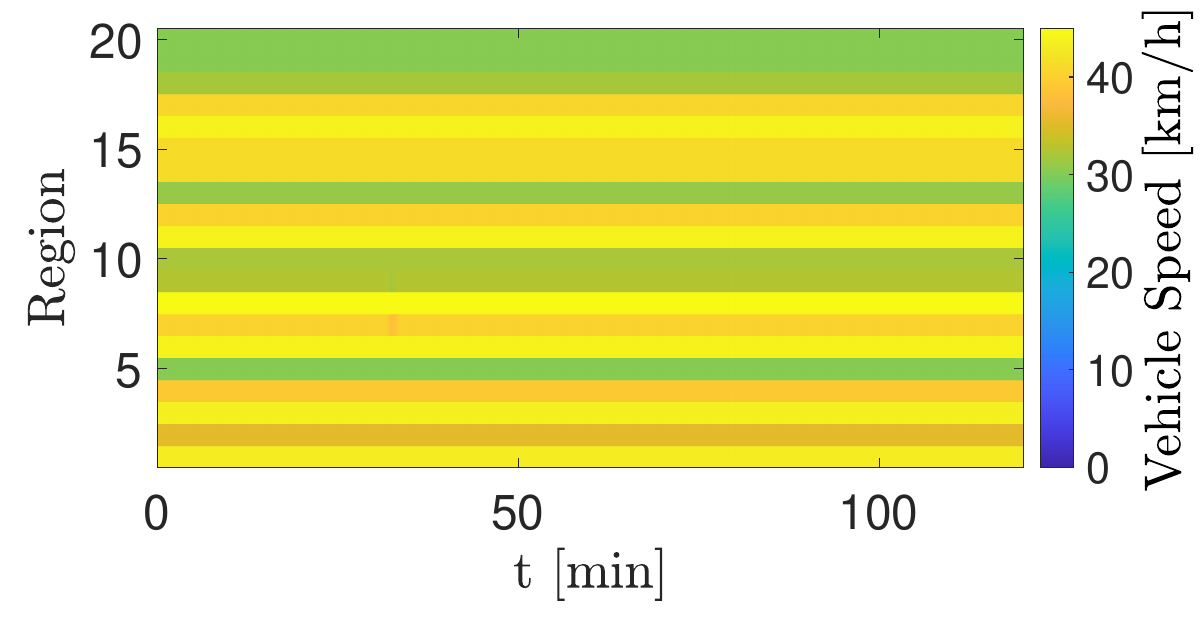}
\caption{Vehicle speed heatmap response, $[{\text{km}}/{\text{h}}]$, showing fast recovery to free-flow nominal operation.}
\label{fig:v_20_heat}
\end{subfigure}
\vspace{-2mm}
\caption{Heatmaps showing network log density-error, $[{\text{veh}}/{\text{km}}]$, and vehicle speed, $[{\text{km}}/{\text{h}}]$, evolution. 
Rapid convergence to the reference operating point is observed, with vehicle speeds in all regions stabilizing at nominal free-flow values.
A control disruption of 1.5~[min] duration at t = 30~[min] causes temporary congestion in regions 7, 9, 12 and 15 (see Fig.~\ref{fig:v_20_heat}), but the VAC scheme quickly restores the system to the reference set-points demonstrating rapid recovery.
During the disruption event unregulated vehicles enter the network from regions~1, 3 and 5.}
\label{fig:r_20}
\vspace{-3mm}
\end{figure}


\subsection{Assessing Network Stability Under Uncertainty\label{sec:uncert}}


Next, to demonstrate the robustness of the developed VAC framework while operating under uncertainty, 200 different  simulations are conducted on the 20-region network from \Cref{sec:scal} as follows.
The simulation scenario from \Cref{sec:num_sce_1} is repeated 200 times, each time with a different randomly generated uncertainty function, $d_i(\rho_i)$, abiding by the same conditions, as presented in \Cref{asm:d_prop}. 
The same fixed control gains are used across all simulations (shown in \Cref{tab:con_par_20_uncert}), allowing to demonstrate the framework's ability to handle a range of perturbation functions with constant gain parameters.
In analogy with \Cref{sec:scal}, the VAC schemes \eqref{eq:con_prop}, \eqref{eq:con_prop_non}, \eqref{eq:con_1st_ord}, \eqref{eq:con_2nd_ord}, and \eqref{sys_u} are employed for regions~1 to 4, 5 to 8, 9 to 12, 13 to 16 and 17 to 20, respectively.
The procedure for assessing network stability under uncertainty is outlined below.

Initially, the VAC scheme at each region is tuned only once, with gain values that satisfy condition \eqref{eq:eta} of \Cref{thm:1}.
For tuning, we use the uncertainty Lipschitz constant, $v^{d,L}_i\in\mathbb{R}_+$.
Specifically, for each region, the uncertainty perturbation function is assumed to be a polynomial with a maximum flow value of $\max\{d_i(\rho_i)\}=500\:[{\text{veh}}/{\text{h}}]$ (see Fig.~\ref{fig:g5} for an example visualization) occurring at a peak density equal to $0.2\rho_i^J$.
Using this function the worst-case uncertainty Lipschitz constant is calculated for each region and used for gain selection.
The resulting Lipschitz constant satisfied $v^{d,L}_i\in[54.1,76.7]$, see \Cref{tab:params_20}.
In the simulations, the peak of each uncertainty perturbation function employed, $d_i(\rho_i),i\in\mathcal{N}$ ($\max\{d_i(\rho_i)\}\leq500\:[{\text{veh}}/{\text{h}}],i\in\mathcal{N}$), corresponds to densities in $(0.2\rho_i^J,0.3\rho_i^J)$.
Hence, the randomly selected uncertainty functions are constrained to Lipschitz constants no larger than those used in the initial tuning. 
This tests whether the proposed VAC framework can stabilize the system under substantial uncertainty without further gain retuning.
\begin{figure}[t]
\centering
\includegraphics[trim={0.5cm 8.5cm 1.5cm 8cm},clip,width=0.9\columnwidth]{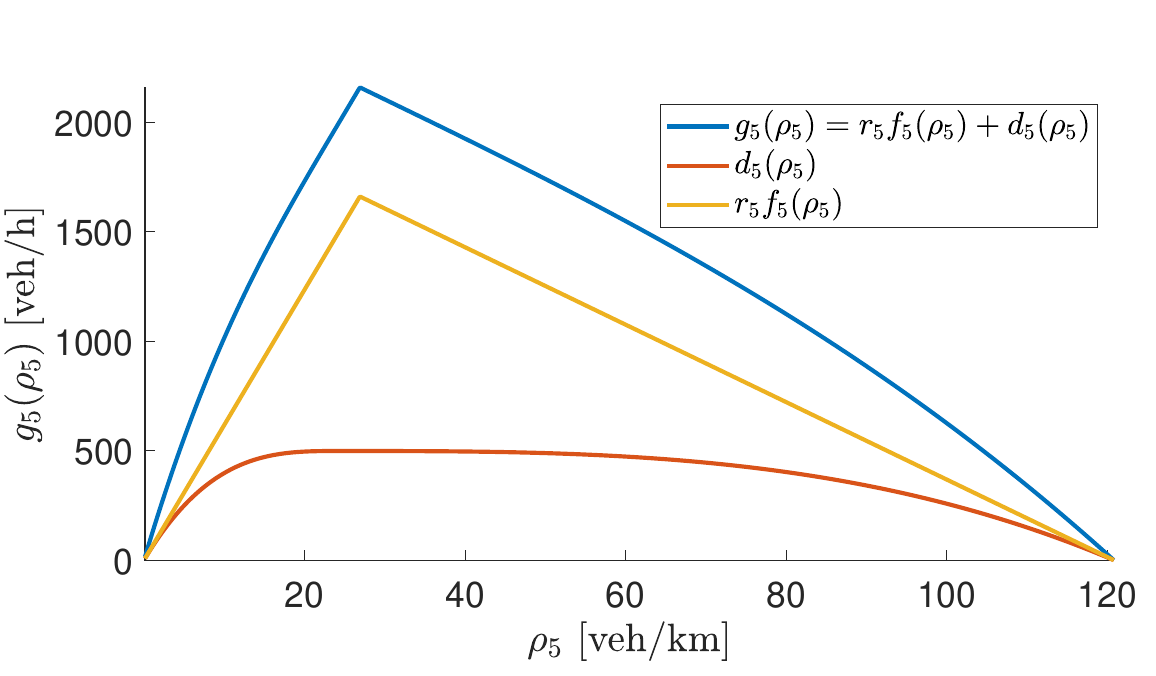}
\vspace{-2mm}
\caption{Worst case total inter-regional flow $[{\text{veh}}/{\text{h}}]$ for region 5, $g_5(\rho_5)$ (blue line). Estimated inter-regional flow $r_5 f_5(\rho_5)$ (orange line). Unknown uncertainty flow $d_5(\rho_5)$ (red line). 
Corresponding uncertainty Lipschitz constant $v_5^{d,L}=60.294$.
         }
         \label{fig:g5}
                  \vspace{-3mm}
\end{figure}

It is noted that in all simulations, the desired density set points correspond to free-flow conditions.
The density set-points and the parameters of the simulation are given in \Cref{tab:con_set_20_uncert,tab:params_20} respectively.
For presentation compactness, for each simulation and region, the absolute density deviation from steady-state, $e_{\rho_i}^{ss}(t)=|\rho_i(t)-\rho_i^{ss}|,i\in\mathcal{N}$, is calculated.
The maximum density deviation across all regions at each time step is then recorded as $e_{\rho}^{ss}(t)=\max_{i\in\mathcal{N}}\{e_{\rho_i}^{ss}(t)\}$.
Finally, for each time interval, the worst maximum density deviation across the 200 simulations, $e_{\rho,200}^{ss}(t)=\max_{1,..,200}\{e_{\rho}^{ss}(t)\}$, is saved into a new time series.
This condenses the results into a single plot capturing the largest error of all simulations under the VAC framework.

The series of experiments regarding robustness are presented next.
As depicted in Fig.~\ref{fig:r_200}, fast steady-state network stabilization is observed in the first 30 seconds.
At $t=30\,\text{[min]}$, the developed VAC framework becomes disengaged (for $1.5\,\text{[min]}$) and unregulated vehicles enter the network from regions 1, 3, and 5, at capacity flow rates, making some regions congested (regions 7, 9, 12 and 15, see Fig.~\ref{fig:v_20_heat}) and resulting in increased error.
When the VAC framework is re-engaged it reduces vehicle inflow to congested regions and increases vehicle inflow to less dense regions.
As a result, upon re-engagement the error quickly decreases in all the conducted simulations, as demonstrated in Fig.~\ref{fig:r_200}, validating the ability of the developed VAC design framework to stabilize the system, even at the presence of significant uncertainty and control disruptions.
\begin{figure}[t]
\centering
\includegraphics[trim={0.6cm 8.5cm 1.6cm 9cm},clip,width=0.8\columnwidth]{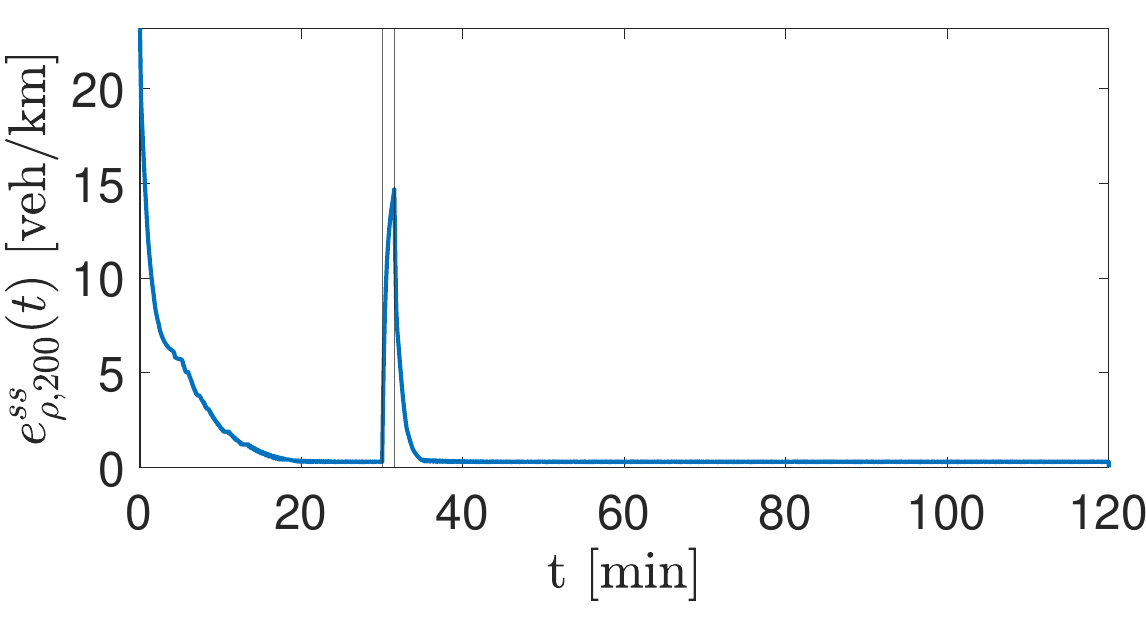}
\vspace{-2mm}
\caption{Worst maximum density deviation from steady-state across 200 simulations $[{\text{veh}}/{\text{km}}]$. 
Vertical lines indicate the $1.5\,\text{[min]}$ interval of disengagement of the VAC framework from all the regions. 
At this interval unregulated vehicles enter the network from regions~1, 3 and 5 at capacity flow rates.
         }
         \label{fig:r_200}
                  \vspace{-3mm}
\end{figure}

\subsection{Assessing VAC scheme sensitivity to parameter changes\label{sec:sens}}
To evaluate the ability of different VAC dynamic schemes to regulate demand in a stable manner, as well as their sensitivity to changes in VAC parameters, we conducted 50 simulations on the 20-region network from \Cref{sec:num_sce_1} using VAC dynamics with randomly generated parameters that satisfy the presented stability conditions. 
The desired operating point for this series of simulations is given in \Cref{tab:con_set_20}.
Similar to the previous sections, the VAC schemes \eqref{eq:con_prop}, \eqref{eq:con_prop_non}, \eqref{eq:con_1st_ord}, \eqref{eq:con_2nd_ord}, and \eqref{sys_u} are employed for regions~1 to 4, 5 to 8, 9 to 12, 13 to 16 and 17 to 20, respectively.
An indicative set of random gain values employed in one of the simulations, which satisfies the conditions of \Cref{thm:1}, is reported in \Cref{tab:con_par_20_sens}.
To condense the results into a single plot, the same method employed in \Cref{sec:uncert} is used to calculate the worst maximum density deviation from the desired operating point.
Pointedly, for each simulation the absolute density deviation from the desired operating point, $e_{\rho_i}^{*}(t)=|\rho_i(t)-\rho_i^{*}|,i\in\mathcal{N}$, is calculated for each region.
Using $e_{\rho_i}^{*}$, the maximum density deviation across all regions at each time step is then recorded as $e_{\rho}^{*}(t)=\max_{i\in\mathcal{N}}\{e_{\rho_i}^{*}(t)\}$.
Finally, for each time interval, the worst maximum density deviation across the 50 simulations, $e_{\rho,50}^{*}(t)=\max_{1,..,50}\{e_{\rho}^{*}(t)\}$, is saved into a new time series.

\begin{table}[t]
\centering
\caption{Random gain values (these gains satisfy \Cref{thm:1}) employed in one of the simulations in \Cref{sec:sens}}
\label{tab:con_par_20_sens}
\begin{tabularx}{\columnwidth}{cX}
\hline
VAC scheme: & Gains: \\
\hline 
\eqref{eq:con_prop} &$\boldsymbol{\eta}_{1,..,4}=[717.8,857.9,466.5,429.2]^T$\\
	& $\mathbf{c}_{1,..,4}=[15187,12019,2955,3669]^T$\\
\eqref{eq:con_prop_non} &$\boldsymbol{\eta}_{5,..,8}=[451.3,659.4,1040,550.9]^T$\\
	&$\mathbf{c}_{5,..,8}=[4240,10501,26829,6472]^T$\\
	&$\phi_i(\rho_i)=0.001\rho_i^3, i=5,..,8$\\
\eqref{eq:con_1st_ord} &$\boldsymbol{\eta}_{9,..,12}=[543.4,523,591.4,313.1]^T$\\
	&$\boldsymbol{\gamma}_{9,..,12}=[271.7,261.5,295.7,156.6]$\\
	&$\mathbf{c}_{9,..,12}=[20289,8820,9327,10032]^T$\\
	&$\tau_i=0.1, i=9,..,12$\\
\eqref{eq:con_2nd_ord} &$\boldsymbol{\eta}_{13,..,16}=[420.6,703,517.2,475.3]^T$\\
	&$\mathbf{c}_{13,..,16}=[6697,6949,16049,3113]^T$\\
	&$\tau_i=0.1, \kappa_i=0.2, i=13,..,16$\\
\eqref{sys_u} &$\boldsymbol{\rho}^{th,1}_{17,..,20}=[0.4,0.4,0.3,0.3]^T$\\
	&$\boldsymbol{\rho}^{th,2}_{17,..,20}=[131.4,126.5,118.5,100.8]^T$\\
	&$\mathbf{p}^{c,max}_{17,..,20}=[134.1,129.1,120.9,102.9]^T$\\
	&$\boldsymbol{\beta}^c_{17,..,20}=[535.7,564.3,535.2,550.6]^T$\\
	&$\mathbf{c}_{17,..,20}=[10228,11114,14287,14103]^T$\\
	&$T_{1,i}=0.01,T_{2,i}=0.1,T_{3,i}=0.2, \kappa_i=0.1,\gamma_i=1.023, i=17,..,20$\\
	& $\boldsymbol{\upsilon}_{17,..,20}=[0.001,0.001,0.001,0.001]^T$\\\hline
\end{tabularx}
\end{table}

The results are presented next.
At the initial stages of the simulations (up to $t=30\,\text{[min]}$), fast stabilization to the desired operating point is demonstrated in all cases, see Fig.~\ref{fig:r_50}.
The varying VAC parameters result in different transient responses for each simulation.
At $t=30\,\text{[min]}$ due to the control disruption event the VAC framework is disengaged from all the regions for $1.5\,\text{[min]}$, and unregulated vehicles enter the network from regions~1, 3 and 5 at capacity flow rates causing congestion to some regions.
Upon re-engagement of the VAC framework, fast recovery to the desired operating point is observed, as demonstrated in Fig.~\ref{fig:r_50}, validating the ability of the developed VAC design framework to stabilize the system, even at the presence of significant uncertainty and control disruptions.
These results indicate that the proposed VAC framework can effectively regulate vehicular flow, provided that the stability criterion is satisfied.

In conclusion, the simulation results clearly demonstrate the effectiveness of the proposed VAC strategies and the validity of the results presented in \Cref{sec:stab_cond} in the presence of modelling uncertainty, control disruptions and even constant user non-adherence to VAC commands.

\begin{figure}[t]
\centering
\includegraphics[trim={0.6cm 8.5cm 1.6cm 9cm},clip,width=0.8\columnwidth]{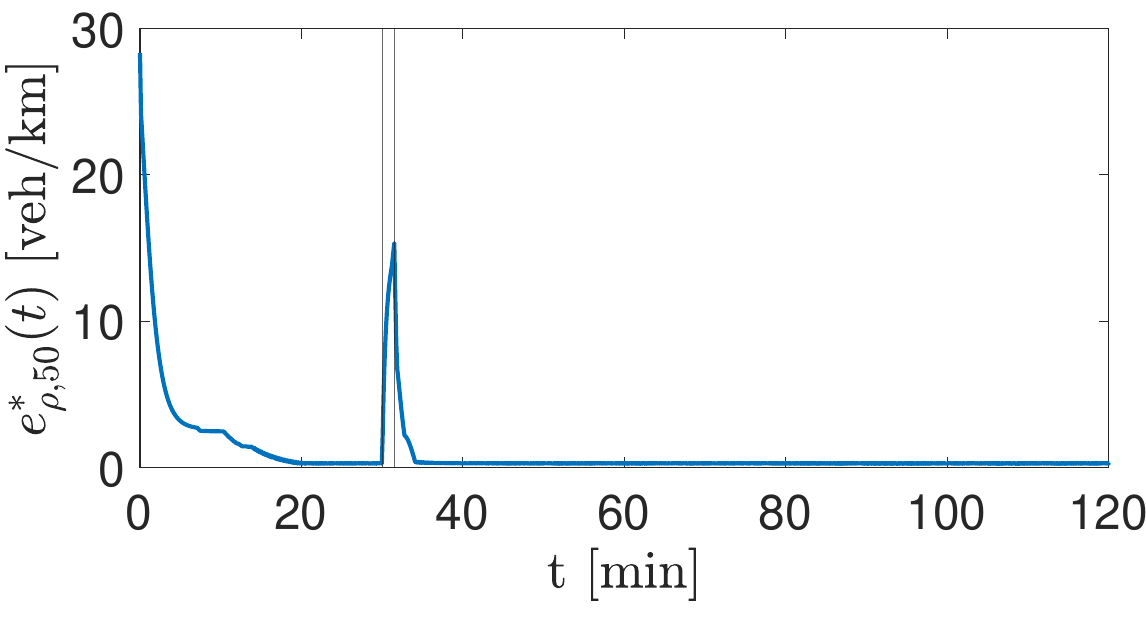}
\vspace{-2mm}
\caption{Worst maximum density deviation from desired operating point of \Cref{tab:con_set_20} across 50 simulations $[{\text{veh}}/{\text{km}}]$. 
Vertical lines indicate the $1.5\,\text{[min]}$ interval of disengagement of the VAC framework from all the regions. 
At this interval unregulated vehicles enter the network from regions~1, 3 and 5 at capacity flow rates.
         }
         \label{fig:r_50}
                  \vspace{-3mm}
\end{figure}

\section{Conclusions\label{sec:conclusions}}\color{black}
This work considered a broad class of decentralized vehicular admission control schemes (VAC) described by general nonlinear dynamics and examined the stability properties of their feedback connection with large-scale, heterogeneous regional, traffic networks described by uncertain, nonlinear, concave macroscopic fundamental diagrams.
Passivity theory was employed and scalable, locally verifiable conditions on the VAC dynamics were developed, that enable stability guarantees at the presence of modelling uncertainty.
Suitable examples of VAC dynamics that demonstrate the applicability of the proposed conditions were presented. 
These conditions provide a flexible framework for the design of VAC schemes, enabling heterogeneous VAC dynamics in each region, and thus complementing different demand operational objectives, and network infrastructure.
The developed analytic results were validated with numerical simulations on a 6 and a 20-region system demonstrating the effectiveness and applicability of the presented VAC framework.

A plethora of important directions for future work exist including, the incorporation of sensor noise into the model, and to validate the presented results using a dedicated traffic simulation software. 
Moreover, since the developed framework does not currently account for external demand, another interesting direction is to investigate dynamic demand scenarios and explore how VAC designs may adapt to accumulating demand at the boundary.
Furthermore, future research could incorporate state-dependent, dynamically varying outflow split functions to enable more realistic modelling of route choice dynamics.
Finally, future work will extend the proposed framework to incorporate dynamic uncertainty functions.




\appendices

\section{Proof of \Cref{lem:int}}\label{sec:Ap2}
To prove \Cref{lem:int} it suffices to show that for each $i\in\mathcal{N}$, the new VAC dynamics \eqref{eq:int}, are locally input strictly passive about their equilibrium values $(x^*_{\rho_i},-\rho^*_i,z_i^*)$.

\begin{proof}
To show local input strict passivity, the following storage function is employed
\begin{align}\label{eq:lyap_int}
V_i(x_{\rho_i},z_i)=\tilde{V}_i(x_{\rho_i})+\frac{\upsilon_i}{2}(z_i-z_i^*)^2
\end{align}
where $\tilde{V}_i(x_{\rho_i})$ is a storage function associated with \eqref{eq:int_1}-\eqref{eq:int_2}, according to \Cref{asm:con_dyn_int}, i.e. for a constant input $-\rho_i^*$ and steady state value $x^*_{\rho_i}$ of the \eqref{eq:int_1}-\eqref{eq:int_2} dynamics, $\tilde{V}_i(x_{\rho_i})$ satisfies \eqref{eq:stor_fun} and \eqref{eq:phi_bound}.
Hence,
\begin{subequations}
\begin{align}
V_i(x_{\rho_i},z_i)&>0\text{ in }\mathbb{R}\backslash \{x_{\rho_i}^*\}\times\mathbb{R}\backslash \{z_i^*\},\\
V_i(x_{\rho_i}^*,z_i^*)&=0.
\end{align}
\end{subequations}
Employing \eqref{eq:stor_fun_3}, \eqref{eq:phi_bound}, and \eqref{eq:int_3} the derivative of \eqref{eq:lyap_int} yields
\begin{subequations}
\begin{align}
\dot{V}_i=&\dot{\tilde{V}}_i+\upsilon_i(z_i-z_i^*)\dot{z}_i\\
\leq&(\tilde{u}_i-\tilde{u}^*_i)(-{\rho}_i-(-{\rho}^*_i))-{\eta}_i({\rho}_i-{\rho}^*_i)^2\nonumber\\
&+(z_i-z_i^*)(-{\rho}_i-(-\tilde{\rho}_i))\label{eq:lyap_int_1}
\end{align}
\end{subequations}
Since the density equilibrium $\rho_i^*$ satisfies \eqref{eq:int_eq_cond}, $\rho_i^*=\tilde{\rho}_i$, and  \eqref{eq:lyap_int_1} yields
\begin{align}\label{eq:lyap_int_2}
\dot{V}_i\leq&(\tilde{u}_i-\tilde{u}^*_i)(-{\rho}_i-(-{\rho}^*_i))-{\eta}_i({\rho}_i-{\rho}^*_i)^2\nonumber\\
&+(z_i-z_i^*)(-{\rho}_i-(-\tilde{\rho}_i))+({u}_i-{u}^*_i)(-{\rho}_i-(-{\rho}^*_i))\nonumber\\
&-({u}_i-{u}^*_i)(-{\rho}_i-(-{\rho}^*_i))
\end{align}
Employing \eqref{eq:int_4} and via $u_i^*=\tilde{u}_i^*+z_i^*$, \eqref{eq:lyap_int_2} yields
\begin{subequations}\label{eq:lyap_int_3}
\begin{align}
\dot{V}_i\leq&(\tilde{u}_i-\tilde{u}^*_i)(-{\rho}_i-(-{\rho}^*_i))-{\eta}_i({\rho}_i-{\rho}^*_i)^2\nonumber\\
&+(z_i-z_i^*)(-{\rho}_i-(-\tilde{\rho}_i))+({u}_i-{u}^*_i)(-{\rho}_i-(-{\rho}^*_i))\nonumber\\
&-(\tilde{u}_i+z_i-\tilde{u}_i^*-z_i^*)(-{\rho}_i-(-{\rho}^*_i))\\
\dot{V}_i\leq&({u}_i-{u}^*_i)(-{\rho}_i-(-{\rho}^*_i))-{\eta}_i({\rho}_i-{\rho}^*_i)^2
\end{align}
\end{subequations}
Hence, the new VAC dynamics formed by an integrator and \eqref{eq:int_1}-\eqref{eq:int_2} in the configuration given by \eqref{eq:int} are locally input strictly passive about their equilibrium values $(x^*_{\rho_i},-\rho_i,z_i^*)$.
\end{proof}

\section{Proof of \Cref{thm:1}\label{sec:Ap1}}

This appendix provides the proof of \Cref{thm:1}.

\begin{proof}
\Cref{asm:con_dyn} is employed to define a Lyapunov function for the feedback interconnection between \Cref{str:t_mod}, and the control dynamics \eqref{eq:con_dyn}.

Consider the following Lyapunov candidate $V:\mathbb{R}^{\sum_1^{\lvert\mathcal{N}\rvert} n_i}\times\mathbb{R}^{\lvert\mathcal{N}\rvert}\rightarrow\mathbb{R}_+$ for the overall feedback interconnection
\begin{align}\label{eq:L}
V(\boldsymbol{x}_{\boldsymbol{\rho}}(t),\boldsymbol{\rho}(t))&=\sum_{i\in\mathcal{N}}\big(\frac{L_i}{2}{(\rho_i(t)-{\rho^*_i})^2}+V_i(x_{\rho_i}(t))\big).
\end{align}
Onwards, time dependence is dropped for compactness.
Next we consider solutions to \Cref{str:t_mod}, \eqref{eq:con_dyn} within $\bar{\mathcal{M}}$;
an open subset that contains the equilibrium point, of the {connected} compact set $\mathcal{M}\subset\mathbb{R}^{\sum_1^{\lvert\mathcal{N}\rvert} n_i}\times\mathbb{R}^{\lvert\mathcal{N}\rvert}$, given by
\begin{align}\label{eq:loc_set}
\mathcal{M}=\{\boldsymbol{x}_{\boldsymbol{\rho}},\boldsymbol{\rho}&:V\leq\epsilon,x_{\rho_i}\in\mathcal{X}_i\cap\hat{\mathcal{X}}_i, -{\rho}_i\in\mathcal{U}_i, \forall i\in\mathcal{N}\}.
\end{align}
Note, that a suitable selection of $\epsilon$, can ensure that $\rho_i \in [0, \rho^J_i]$.
The reader is reminded that $\hat{\mathcal{X}}_i$ is the region of attraction of $x_{\rho_i}^*$,  $\mathcal{X}_i$ is an open neighbourhood of $x_{\rho_i}^*$, and  $\mathcal{U}_i$ is an open neighbourhood of $-\rho_i^*$.

Next we show that $\mathcal{M}$ is invariant.
Employing \Cref{asm:con_dyn} the following holds
\begin{subequations}
\begin{align}\label{eq:L_pd}
V(\boldsymbol{x}_{\boldsymbol{\rho}},\boldsymbol{\rho})&>0\;\text{in}\;\mathbb{R}^{\sum_1^{\lvert\mathcal{N}\rvert} n_i}\times\mathbb{R}^{\lvert\mathcal{N}\rvert}\backslash\{\boldsymbol{x}^*_{\boldsymbol{\rho}},\boldsymbol{\rho}^*\},\\
V(\boldsymbol{x}^*_{\boldsymbol{\rho}},\boldsymbol{\rho}^*)&=0.
\end{align}
\end{subequations}
Employing \eqref{eq:d_rho}, and invoking Assumption~\ref{asm:con_dyn} the derivative of \eqref{eq:L} is  given by
\begin{align}\label{eq:dL_1}
\dot{V}\leq&\sum_{i\in\mathcal{N}}(\rho_i-{\rho^*_i})\Big(-{r_i} f_i(\rho_i) -  d_i(\rho_i)\nonumber\\
&+\sum_{j\in\mathcal{P}_i}w_{ji} ({r_j} f_{j}(\rho_j)+d_j(\rho_j))+{u_i}\Big)\nonumber\\
&+(-{\rho}_i-(-{\rho}^*_i))(u_i-{u}^*_i)-{\theta}_i(-{\rho}_i-(-{\rho}^*_i)).
\end{align}
At equilibrium $\dot{\rho}_i=0$ and \eqref{eq:d_rho_eq_1} takes the following form
\begin{equation}\label{eq:rho_eq}
\sum_{j\in\mathcal{P}_i}w_{ji}( {r_j} f_{j}({\rho_j^*})+d_j(\rho_j^*))+{u_i^*}-{r_i} f_i({\rho_i^*}) -d_i(\rho_i^*)=0.
\end{equation}
Subtracting \eqref{eq:rho_eq} from \eqref{eq:dL_1} and employing \eqref{eq:phi_bound} yields
\begin{align}\label{eq:dL_2}
\dot{V}\leq&\sum_{i\in\mathcal{N}}( \rho_i-{\rho^*_i})\Big(-{r_i} \left(f_i(\rho_i)-f_i({\rho_i^*})\right)\nonumber\\
&-(d_i(\rho_i)-d_i(\rho_i^*))+\sum_{j\in\mathcal{P}_i}w_{ji} \big( {r_j}\left(f_{j}(\rho_j)-f_{j}({\rho_j^*})\right)\nonumber\\
&+\left(d_j(\rho_j)-d_j(\rho_j^*)\right)\big)+{u_i}-{u_i^*} \Big)\nonumber\\
&-( \rho_i-{\rho^*_i})(u_i-{u}^*_i)-{\eta}_i ( \rho_i-{\rho^*_i})^2.
\end{align}
Since both $f_{i}(\rho_i)$ and $d_{i}(\rho_i)$ satisfy the Lipschitz condition, with Lipschitz constants $v^L_i,v^{d,L}_i$ (see Assumptions~\ref{asm:d_prop}, \ref{asm:f_prop}), \eqref{eq:dL_2} yields
\begin{align}\label{eq:dL_3}
\dot{V}(t)\leq&\sum_{i\in\mathcal{N}}\Big(r_i v^L_i ( \rho_i-{\rho^*_i})^2+v_i^{d,L} ( \rho_i-{\rho^*_i})^2 \nonumber\\
&+\sum_{j\in\mathcal{P}_i}w_{ji}(r_j   v^L_j+v^{d,L}_j)\lvert  \rho_i-{\rho^*_i}\rvert \lvert  \rho_j-{\rho^*_j}\rvert \nonumber\\
&-{\eta}_i ( \rho_i-{\rho^*_i})^2\Big).
\end{align}
Employing \eqref{eq:a_ji} and the following perfect squares identity 
\begin{align*}
\beta xy=&{\beta x^2}/({2\xi})+{\xi\beta y^2}/{2} 
-\big(x\sqrt{{\beta}/({2\xi})}-y\sqrt{{\xi\beta}/{2}}\big)^2,
\end{align*}
with the constant $\xi\in\mathbb{R}_+$, yields
\begin{align}\label{eq:dL_4}
\dot{V}\leq&\sum_{i\in\mathcal{N}}\Big(-({\eta}_i-v_i^{d,L}-r_i v_i^{L}) ( \rho_i-{\rho^*_i})^2
\nonumber\\
&+\sum_{j\in\mathcal{P}_i}\big(\frac{a_{ji}}{2 \xi_{ji}}( \rho_i-{\rho^*_i})^2+\frac{\xi_{ji}a_{ji}}{2} ( \rho_j-{\rho^*_j})^2\nonumber\\
&-\left(( \rho_i-{\rho^*_i})\sqrt{\frac{a_{ji}}{2 \xi_{ji}}}-( \rho_j-{\rho^*_j})\sqrt{\frac{\xi_{ji}a_{ji}}{2}}\right)^2\big) \Big).
\end{align}
Additional manipulations result to 
\begin{align}\label{eq:dL_5}
\dot{V}\leq&-\sum_{i\in\mathcal{N}}\Big({\eta}_i-\big(v_i^{d,L}+r_i v_i^{L}
+\sum_{j\in\mathcal{P}_i}\frac{a_{ji}}{2 \xi_{ji}}\nonumber\\
&+\sum_{j\in\mathcal{S}_i}\frac{\xi_{ij}a_{ij}}{2}\big) \Big)( \rho_i-{\rho^*_i})^2.
\end{align}

If \eqref{eq:eta} holds then \eqref{eq:dL_5} is negative definite in $\mathcal{M}\setminus\{{\boldsymbol{x}_{\boldsymbol{\rho}}^{*},\boldsymbol{\rho}^*}\}$ and zero at $\boldsymbol{x}_{\boldsymbol{\rho}}=\boldsymbol{x}_{\boldsymbol{\rho}}^{*}$, $\boldsymbol{\rho}={\boldsymbol{\rho}^*}$, i.e. 
\begin{subequations}
\begin{align}
\dot{V}
&\leq-\sum_{i\in\mathcal{N}}\zeta_i ( \rho_i-{\rho^*_i})^2<0\;\text{in}\;\mathcal{M}\setminus\{\boldsymbol{x}_{\boldsymbol{\rho}}^{*},\boldsymbol{\rho}^*\},\label{eq:dL}\\
\zeta_i&={\eta}_i-\big(v_i^{d,L}
+r_i v_i^{L}+\sum_{j\in\mathcal{P}_i}\frac{a_{ji}}{2 \xi_{ji}}+\sum_{j\in\mathcal{S}_i}\frac{\xi_{ij}a_{ij}}{2}\big).
\end{align}
\end{subequations}
\Cref{eq:dL} and \Cref{asm:con_dyn} imply that for sufficiently small $\epsilon$, $V$ has a strict local minimum at $(\boldsymbol{x}_{\boldsymbol{\rho}}^{*},\boldsymbol{\rho}^*)$ and  moreover \eqref{eq:dL} implies that $\mathcal{M}$ is invariant, i.e., in this neighbourhood, $V$ is non-increasing on each state.
Hence, by Lasalle's Invariance Principle it is guaranteed that on $\mathcal{M}$, all solutions of the feedback interconnection between \Cref{str:t_mod}, and \eqref{eq:con_dyn}
that start on $(\boldsymbol{x}_{\boldsymbol{\rho}}^{}(0),\boldsymbol{\rho}(0))\in\bar{\mathcal{M}}$, converge to the largest invariant set, $\Lambda\subset\mathcal{M}$, where
\begin{equation}
\Lambda=\mathcal{M}\cap\big\{ (\boldsymbol{x}_{\boldsymbol{\rho}},\boldsymbol{\rho})\in\mathcal{M}:\dot{V}=0\big\}.
\end{equation}
At any point in $\Lambda$, $\dot{V}=0$, then by \eqref{eq:dL}, $\boldsymbol{\rho}=\boldsymbol{\rho}^*$ and this implies that $\dot{V}_i\leq0$ (see \eqref{eq:stor_fun_3}).
Since $\dot{V}_i \leq 0,\forall i\in\mathcal{N}$ and $\dot{V} = 0$, then each $\dot{V}_i = 0$.
Hence at equilibrium, $\boldsymbol{\rho}=\boldsymbol{\rho}^*$ is the corresponding constant input to the control dynamics \eqref{eq:con_dyn} that consecutively describe the input $\mathbf{u}$.
Moreover, by the definitions associated with \eqref{eq:con_dyn_eq}, when $\boldsymbol{\rho}=\boldsymbol{\rho}^*$ then $\boldsymbol{x}_{\boldsymbol{\rho}}^{}\to\boldsymbol{x}_{\boldsymbol{\rho}}^{*}$ since $\boldsymbol{x}_{\boldsymbol{\rho}}^{*}$ is locally asymptotically stable which by \Cref{asm:con_dyn} corresponds to a strict local minimum of $V_i$.
Hence, within $\Lambda$, $\boldsymbol{x}_{\boldsymbol{\rho}}^{}=\boldsymbol{x}_{\boldsymbol{\rho}}^{*}$ and by Lasalle's Invariance principle all solutions of the feedback interconnection  between \Cref{str:t_mod}, and the control dynamics \eqref{eq:con_dyn} with initial conditions $(\boldsymbol{x}_{\boldsymbol{\rho}}^{}(0),\boldsymbol{\rho}(0))\in\bar{\mathcal{M}}\subset\mathcal{M}$, converge to the largest invariant set of equilibrium points within ${\mathcal{M}}$.
\end{proof}

\section{Proof of \Cref{lemma_passivity}\label{sec:Ap3}}
This appendix provides the proof of \Cref{lemma_passivity}.

\begin{proof}
The condition $\gamma^c_{i} < \beta^c_i/K_i $ in \eqref{set_point_dynamics} 
implies that the $\mathcal{L}_2$-gain from $(\rho_i - \rho^*_i)$ to $(p^c_i(\rho_i) - p^{c}_i(\rho^*_i))$ is less than $\beta^c_i/K_i$ for any equilibrium $(\rho^*_i, p^{c}_i(\rho^*_i))$.
Hence, since \eqref{sys_G} has an $\mathcal{L}_2$-gain of $K_i$, it follows that the $\mathcal{L}_2$ gain from $(\rho_i - \rho^*_i)$ to $(u_{1,i} - u^*_{1,i})$ is less than $\beta^c_i$ for any feasible equilibrium pair $ (\rho^*_i, u^*_{1,i})$. This allows to show that the system from $-\rho_i$ to $u_i$ is input strictly passive about any equilibrium by directly applying \cite[Prop. 1]{kasis2016primary}.
\end{proof}



\bibliographystyle{IEEEtran}

\bibliography{sample} 

\end{document}